\title{Morita equivalence classes of principal blocks with elementary abelian defect groups of order $64$ \thanks{This paper is part of the work done by the author during his PhD at the University of Manchester, supported by a Manchester Research Scholar Award and a President’s Doctoral Scholar Award.}}
\author{
   Cesare Giulio Ardito\thanks{Department of Mathematics, City University of London, Northampton Square, London, EC1V 0HB, United Kingdom. Email: cesareg.ardito@gmail.com}
}
\newtheorem{theorem}{Theorem}[section]
\newtheorem*{theorem*}{Theorem}
\newtheorem{example}[theorem]{Example}
\newtheorem{method}[theorem]{Method}
\newtheorem{lemma}[theorem]{Lemma}
\newtheorem{corollary}[theorem]{Corollary}
\newtheorem{proposition}[theorem]{Proposition}
\newtheorem{remark}[theorem]{Remark}
\newtheorem{conjecture}[theorem]{Conjecture}
\newcommand{\Aut}{\operatorname{Aut}}
\newcommand{\Out}{\operatorname{Out}}
\newcommand{\Pic}{\mathop{\rm Pic}\nolimits}
\newcommand{\SL}{\operatorname{SL}}
\newcommand{\GL}{\operatorname{GL}}
\newcommand{\PSL}{\rm PSL}
\newcommand{\cO} {\mathcal{O}}
\newcommand{\cT} {\mathcal{T}}
\def\bigcp{\mathop{\mathchoice 
 {\hbox{\sf\Large\lower 0.1\baselineskip\hbox{Y}}}%
 {\hbox{\sf\large\lower 0.1\baselineskip\hbox{Y}}}%
 {\hbox{\sf\normalsize\lower 0.1\baselineskip\hbox{Y}}}%
 {\hbox{\sf\tiny\lower 0.1\baselineskip\hbox{Y}}}%
}}
\def\bigtimes{\mathop{\mathchoice 
 {\hbox{\sf\Large\lower 0.1\baselineskip\hbox{X}}}%
 {\hbox{\sf\large\lower 0.1\baselineskip\hbox{X}}}%
 {\hbox{\sf\normalsize\lower 0.1\baselineskip\hbox{X}}}%
 {\hbox{\sf\tiny\lower 0.1\baselineskip\hbox{X}}}%
}}
\def\Sym(#1){\mathop{\rm Sym}(#1)}
\def\Sym(#1){S_{#1}}
\def\diag(#1){\mathop{\rm diag}(#1)}
\newcounter{bl}
\begin{document}
\maketitle \vspace*{-1em}
\begin{abstract}
We classify the Morita equivalence classes of principal blocks with elementary abelian defect groups of order $64$ with respect to a complete discrete valuation ring with an algebraically closed residue field of characteristic two. 

Keywords: Donovan's conjecture; Finite groups; Morita equivalence; Block theory; Modular representation theory.

2010 Mathematics Subject Classification: Primary 20C20; Secondary 16D90, 20C05.
\end{abstract}
\section{Introduction}
In the following let $\cO$ be a complete discrete valuation ring, with field of fractions $K$ of characteristic $0$ and residue field $k$, an algebraically closed field of characteristic two. We assume $K$ to be large enough for all finite groups considered in this paper. The triple $(K,\cO, k)$ is usually called \emph{$p$-modular system}.
We say that two algebras $A$ and $B$ are Morita equivalent if there is an equivalence of categories of $A$-modules and $B$-modules. Donovan's conjecture states that for each fixed $p$-group $D$ there are only finitely many Morita equivalence classes of blocks of finite groups with defect group $D$. The conjecture is still open, but it has been proved for abelian $2$-groups in \cite{eali18c} over $k$ and later in \cite{ealiei18} over $\cO$. 

A classification of all Morita equivalence classes of blocks with a smaller elementary abelian defect $2$-group $(C_2)^n$ has been done by Alperin for $n=1$ \cite{alp151}, various authors for $n=2$ \cite{erd82} \cite{lin94} \cite{cra11}, Eaton for $n=3, 4$ \cite{ea14} \cite{ea17} and the author for $n=5$ \cite{ar19}. We classify principal blocks with defect group $(C_2)^6$. The reason we restrict our attention to principal blocks is that, at the moment, the general method to classify blocks described in \cite{ar20} does not allow us to deal with arbitrary blocks in this case, mainly because in many cases we are unable to compute Picard groups of various blocks involved.

In Section 2 we establish our notation and define several algebraic objects. In Section 3 we list many classic reductions in modular representation theory, and prove some technical lemmas. In Section 4 we examine blocks of normal subgroups with odd index, describing a method originally developed in \cite{ar20} and used in \cite{ar19}. In Section 5 we prove our main theorem, and in Section 6 we examine nonprincipal blocks with defect group $(C_2)^6$ and give a list that we conjecture to be complete.

\begin{theorem}
\label{maintheorem6}
Let $G$ be a finite group with an elementary abelian Sylow $2$-subgroup $D$ of order $64$, and let $B$ be the principal block of $\cO G$. Then $B$ is Morita equivalent to the principal block of precisely one of the following groups: \vskip0.5em
\begin{tabular}{rlll}
\stepcounter{bl} (\roman{bl})& 	$(C_2)^6$ &\qquad \qquad & (inertial quotient $\{1\}$) \\
\stepcounter{bl}  (\roman{bl}) & 	$A_4 \times (C_2)^4$  &&  (i.q. $(C_3)_1$) \\
\stepcounter{bl}  (\roman{bl}) & 	$A_5 \times (C_2)^4$  &&  (i.q. $(C_3)_1$) \\
\stepcounter{bl}  (\roman{bl}) & 	$((C_2)^4 \rtimes C_3) \times (C_2)^2$  &&  (i.q. $(C_3)_2$) \\
\stepcounter{bl}  (\roman{bl}) & 	$(C_2)^6 \rtimes C_3$  &&  (i.q. $(C_3)_3$) \\
\stepcounter{bl}  (\roman{bl}) & 	$((C_2)^4 \rtimes C_5) \times (C_2)^2$   &&  (i.q. $C_5$) \\
\stepcounter{bl}  (\roman{bl}) & 	$((C_2)^3 \rtimes C_7) \times (C_2)^3$  &&  (i.q. $(C_7)_1$) \\
\stepcounter{bl}  (\roman{bl}) & 	$SL_2(8) \times (C_2)^3$   &&  (i.q. $(C_7)_1$) \\
\stepcounter{bl}  (\roman{bl}) & 	$(C_2)^6 \rtimes_2 C_7$  &&  (i.q. $(C_7)_2$) \\
\stepcounter{bl}  (\roman{bl}) & 	$(C_2)^6 \rtimes_4 C_7$  &&  (i.q. $(C_7)_3$) \\
\stepcounter{bl}  (\roman{bl}) & 	$(C_2)^6 \rtimes C_9$  &&  (i.q. $C_9$) \\
\stepcounter{bl}  (\roman{bl}) & 	$A_4 \times A_4 \times (C_2)^2$   &&  (i.q. $(C_3 \times C_3)_1$) \\
\stepcounter{bl}  (\roman{bl}) & 	$A_4 \times A_5 \times (C_2)^2$   &&  (i.q. $(C_3 \times C_3)_1$) \\
\stepcounter{bl}  (\roman{bl}) & 	$A_5 \times A_5 \times (C_2)^2$   &&  (i.q. $(C_3 \times C_3)_1$) \\
\stepcounter{bl}  (\roman{bl}) & 	$((C_2)^4 \rtimes C_3) \times A_4$  &&  (i.q. $(C_3 \times C_3)_3$) \\
\stepcounter{bl}  (\roman{bl}) & 	$((C_2)^4 \rtimes C_3) \times A_5$  &&  (i.q. $(C_3 \times C_3)_3$) \\
\stepcounter{bl}  (\roman{bl}) & 	$(C_2)^6 \rtimes (C_3 \times C_3)$  &\qquad \qquad &  (i.q. $(C_3 \times C_3)_2$) \\
\stepcounter{bl}  (\roman{bl}) & 	$((C_2)^4 \rtimes C_{5}) \times A_4$  &&  (i.q. $(C_{15})_1$) \\
\stepcounter{bl}  (\roman{bl}) & 	$((C_2)^4 \rtimes C_{5}) \times A_5$  &&  (i.q. $(C_{15})_1$) \\
\stepcounter{bl}  (\roman{bl}) & 	$((C_2)^4 \rtimes C_{15}) \times (C_2)^2$   &&  (i.q. $(C_{15})_2$) \\
\stepcounter{bl}  (\roman{bl}) & 	$SL_2(16) \times (C_2)^2$   &&  (i.q. $(C_{15})_2$) \\
\stepcounter{bl}  (\roman{bl}) & 	$(C_2)^6 \rtimes C_{15}$  &&  (i.q. $(C_{15})_3$) \\
\stepcounter{bl}  (\roman{bl}) & 	$((C_2)^3 \rtimes C_7) \times A_4 \times C_2$   &&  (i.q. $(C_{21})_1$)\\
\stepcounter{bl}  (\roman{bl}) & 	$((C_2)^3 \rtimes C_7) \times A_5 \times C_2$   &&  (i.q. $(C_{21})_1$) \\
\stepcounter{bl}  (\roman{bl}) & 	$\SL_2(8) \times A_4 \times C_2$  &&  (i.q. $(C_{21})_1$) \\
\stepcounter{bl}  (\roman{bl}) & 	$\SL_2(8) \times A_5 \times C_2$  &&  (i.q. $(C_{21})_1$) \\
\stepcounter{bl}  (\roman{bl}) & 	$(C_2)^6 \rtimes C_{21}$  &&  (i.q. $(C_{21})_2$) \\
\stepcounter{bl}  (\roman{bl}) & 	$((C_2)^3 \rtimes (C_7 \rtimes C_3)) \times (C_2)^3$   &&  (i.q. $(C_7 \rtimes C_3)_1$) \\
\stepcounter{bl}  (\roman{bl}) & 	$J_1 \times (C_2)^3$    &&  (i.q. $(C_7 \rtimes C_3)_1$) \\
\stepcounter{bl}  (\roman{bl}) & 	$\Aut(\SL_2(8)) \times (C_2)^3$   &&  (i.q. $(C_7 \rtimes C_3)_1$) \\
\stepcounter{bl}  (\roman{bl}) & 	$(C_2)^6 \rtimes (C_7 \rtimes C_3)$   &&  (i.q. $(C_7 \rtimes C_3)_2$) \\
\stepcounter{bl}  (\roman{bl}) & 	$(\SL_2(8) \times (C_2)^2) \rtimes C_3 \times C_2$   &&  (i.q. $(C_7 \rtimes C_3)_2$) \\
\stepcounter{bl}  (\roman{bl}) & 	$(C_2)^6 \rtimes (C_7 \rtimes C_3)$   &&  (i.q. $(C_7 \rtimes C_3)_3$) \\
\stepcounter{bl}  (\roman{bl}) & 	$(C_2)^6 \rtimes (C_7 \rtimes C_3)$   &&  (i.q. $(C_7 \rtimes C_3)_4$) \\
\stepcounter{bl}  (\roman{bl}) & 	$A_4 \times A_4 \times A_4$ &&  (i.q. $C_3 \times C_3 \times C_3$) \\
\stepcounter{bl}  (\roman{bl}) & 	$A_4 \times A_4 \times A_5$ &&  (i.q. $C_3 \times C_3 \times C_3$) \\
\stepcounter{bl}  (\roman{bl}) & 	$A_4 \times A_5 \times A_5$ &&  (i.q. $C_3 \times C_3 \times C_3$) \\
\stepcounter{bl}  (\roman{bl}) & 	$A_5 \times A_5 \times A_5$ &&  (i.q. $C_3 \times C_3 \times C_3$) \\
\stepcounter{bl}  (\roman{bl}) & 	$(C_2)^6 \rtimes 3^{1+2}_+$ && (i.q. $3^{1+2}_+$) \\
\stepcounter{bl}  (\roman{bl}) & 	$(C_2)^6 \rtimes 3^{1+2}_-$ && (i.q. $3^{1+2}_-$) \\
\stepcounter{bl}  (\roman{bl}) & 	$((C_2)^5 \rtimes C_{31}) \times C_2$   &&  (i.q. $C_{31}$) \\
\stepcounter{bl}  (\roman{bl}) & 	$\SL_2(32) \times C_2$  &&  (i.q. $C_{31}$) \\
\stepcounter{bl}  (\roman{bl}) & 	$((C_2)^4 \rtimes C_{15}) \times A_4$ && (i.q. $C_{15} \times C_3$) \\
\stepcounter{bl}  (\roman{bl}) & 	$((C_2)^4 \rtimes C_{15}) \times A_5$ && (i.q. $C_{15} \times C_3$) \\
\stepcounter{bl}  (\roman{bl}) & 	$SL_2(16) \times A_4$    &&  (i.q. $C_{15} \times C_3$) \\
\stepcounter{bl}  (\roman{bl}) & 	$SL_2(16) \times A_5$    &&  (i.q. $C_{15} \times C_3$) \\
\stepcounter{bl}  (\roman{bl}) & 	$((C_2)^3 \rtimes C_7) \times ((C_2)^3 \rtimes C_7)$  && (i.q. $C_7 \times C_7$) \\
\stepcounter{bl}  (\roman{bl}) & 	$((C_2)^3 \rtimes C_7) \times \SL_2(8)$ && (i.q. $C_7 \times C_7$) \\
\stepcounter{bl}  (\roman{bl}) & 	$\SL_2(8) \times \SL_2(8)$ && (i.q. $C_7 \times C_7$) \\
\stepcounter{bl}  (\roman{bl}) & 	$(C_2)^6 \rtimes C_{63}$  && (i.q. $C_{63}$) \\
\stepcounter{bl}  (\roman{bl}) & 	$\SL_2(64)$  && (i.q. $C_{63}$) 
\end{tabular} \\
\begin{tabular}{rlll}
\stepcounter{bl}  (\roman{bl}) & 	$(C_2)^6 \rtimes (C_7 \rtimes C_9)$ && (i.q. $ C_7 \rtimes C_9$) \\
\stepcounter{bl}  (\roman{bl}) & $((C_2)^3) \rtimes (C_7 \rtimes C_3)) \times A_4 \times C_2$   &&  (i.q. $((C_7 \rtimes C_3) \times C_3)_1$) \\
\stepcounter{bl}  (\roman{bl}) & $((C_2)^3) \rtimes (C_7 \rtimes C_3)) \times A_5 \times C_2$   &&  (i.q. $((C_7 \rtimes C_3) \times C_3)_1$) \\
\stepcounter{bl}  (\roman{bl}) & $J_1 \times A_4 \times C_2$   &&  (i.q. $((C_7 \rtimes C_3) \times C_3)_1$) \\
\stepcounter{bl}  (\roman{bl}) & $J_1 \times A_5 \times C_2$   &&  (i.q. $((C_7 \rtimes C_3) \times C_3)_1$) \\
\stepcounter{bl}  (\roman{bl}) & $\Aut(\SL_2(8)) \times A_4 \times C_2$   &&  (i.q. $((C_7 \rtimes C_3) \times C_3)_1$) \\
\stepcounter{bl}  (\roman{bl}) & $\Aut(\SL_2(8)) \times A_5 \times C_2$   &&  (i.q. $((C_7 \rtimes C_3) \times C_3)_1$) \\
\stepcounter{bl}  (\roman{bl}) &  $((C_2)^6 \rtimes ((C_7 \rtimes C_3) \times C_3)$   &&  (i.q. $((C_7 \rtimes C_3) \times C_3)_2$) \\
\stepcounter{bl}  (\roman{bl}) & 	$A_4 \wr C_3$ && (i.q. $C_3 \wr C_3$) \\
\stepcounter{bl}  (\roman{bl}) & 	$A_5 \wr C_3$ && (i.q. $C_3 \wr C_3$) \\
\stepcounter{bl}  (\roman{bl}) & $((C_2)^3) \rtimes (C_7 \rtimes C_3)) \times ((C_2)^3) \rtimes C_7)$   &&  (i.q. $(C_7 \rtimes C_3) \times C_7$) \\
\stepcounter{bl}  (\roman{bl}) & $((C_2)^3) \rtimes (C_7 \rtimes C_3)) \times \SL_2(8)$   &&  (i.q. $(C_7 \rtimes C_3) \times C_7$) \\
\stepcounter{bl}  (\roman{bl}) & $J_1 \times ((C_2)^3) \rtimes C_7)$   &&  (i.q. $(C_7 \rtimes C_3) \times C_7$) \\
\stepcounter{bl}  (\roman{bl}) & $J_1 \times \SL_2(8)$   &&  (i.q. $(C_7 \rtimes C_3) \times C_7$) \\
\stepcounter{bl}  (\roman{bl}) & $\Aut(\SL_2(8)) \times ((C_2)^3) \rtimes C_7)$   &&  (i.q. $(C_7 \rtimes C_3) \times C_7$) \\
\stepcounter{bl}  (\roman{bl}) & $\Aut(\SL_2(8)) \times \SL_2(8)$   &&  (i.q. $(C_7 \rtimes C_3) \times C_7$) \\
\stepcounter{bl}  (\roman{bl}) & 	$(C_2)^6 \rtimes ((C_7 \rtimes C_7) \rtimes C_3)$ && (i.q. $((C_7 \rtimes C_7) \rtimes_4 C_3)$) \\ 
\stepcounter{bl}  (\roman{bl}) & 	$(\SL_2(8) \times ((C_2)^3 \rtimes C_7)) \rtimes C_3$ && (i.q. $((C_7 \rtimes C_7) \rtimes_4 C_3)$) \\ 
\stepcounter{bl}  (\roman{bl}) & 	$(\SL_2(8) \times \SL_2(8)) \rtimes C_3$ && (i.q. $((C_7 \rtimes C_7) \rtimes_4 C_3)$) \\ 
\stepcounter{bl}  (\roman{bl}) & 	$(C_2)^6 \rtimes ((C_7 \rtimes C_7) \rtimes C_3)$ && (i.q. $((C_7 \rtimes C_7) \rtimes_5 C_3)$) \\ 
\stepcounter{bl}  (\roman{bl}) & 	$((C_2)^5 \rtimes (C_{31} \rtimes C_5)) \times C_2$    &&  (i.q. $C_{31} \rtimes C_5$) \\
\stepcounter{bl}  (\roman{bl}) & 	$\Aut(\SL_2(32)) \times C_2$ &&  (i.q. $C_{31} \rtimes C_5$) \\
\stepcounter{bl}  (\roman{bl}) & 	$(C_2)^6 \rtimes (C_{63} \rtimes C_3)$ && (i.q. $C_{63} \rtimes C_3$) \\
\stepcounter{bl}  (\roman{bl}) & 	$\SL_2(64) \rtimes C_3$ && (i.q. $C_{63} \rtimes C_3$) \\
\stepcounter{bl}  (\roman{bl}) & $(C_2)^6 \rtimes ((C_7 \rtimes C_3) \times (C_7 \rtimes C_3))$ && (i.q. $(C_7 \rtimes C_3) \times (C_7 \rtimes C_3)$) \\
\stepcounter{bl}  (\roman{bl}) & $((C_2)^3) \rtimes (C_7 \rtimes C_3)) \times J_1$   &&  (i.q. $(C_7 \rtimes C_3) \times (C_7 \rtimes C_3)$) \\
\stepcounter{bl}  (\roman{bl}) & $((C_2)^3) \rtimes (C_7 \rtimes C_3)) \times \Aut(\SL_2(8))$   &&  (i.q. $(C_7 \rtimes C_3) \times (C_7 \rtimes C_3)$) \\
\stepcounter{bl}  (\roman{bl}) & 	$J_1 \times J_1$ && (i.q. $(C_7 \rtimes C_3) \times (C_7 \rtimes C_3)$) \\
\stepcounter{bl}  (\roman{bl}) & 	$J_1 \times \Aut(\SL_2(8))$ && (i.q. $(C_7 \rtimes C_3) \times (C_7 \rtimes C_3)$) \\
\stepcounter{bl}  (\roman{bl}) & 	$\Aut(\SL_2(8)) \times \Aut(\SL_2(8))$ && (i.q. $(C_7 \rtimes C_3) \times (C_7 \rtimes C_3)$) \\
\end{tabular} \vskip1em
\noindent Moreover, if a block $C$ of $\cO H$ for a finite group $H$ is Morita equivalent to $B$, then the defect group of $C$ is isomorphic to $D$. If $C$ is the principal block of $\cO H$, then the inertial quotient of $C$ is isomorphic to the inertial quotient of $B$, and they have the same action on $D$.
\end{theorem}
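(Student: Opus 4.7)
The plan is to follow the general strategy used in \cite{ar19} for $n=5$: reduce each principal block with defect $D\cong(C_2)^6$ to a principal block of a group $G$ of very restricted structure, and then enumerate the possibilities according to the inertial quotient $E=N_G(D)/DC_G(D)$, which is an odd-order subgroup of $\Aut(D)\cong\GL_6(2)$. First I would invoke the reductions collected in Section~3, together with the odd-index analysis of Section~4 (the method of \cite{ar20}), to reduce to a minimal $G$ in which $O_{2'}(G)$ is central and in which every component either has a Sylow $2$-subgroup intersecting $D$ nontrivially or has been absorbed into a direct factor. Combined with Fong--Reynolds and the results of \cite{ealiei18}, this lets us assume that $F^*(G)$ is a central product of a subgroup of $D$ with a collection of quasisimple groups whose principal $2$-blocks have elementary abelian defect of $2$-rank at most $6$.

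Next, one enumerates, up to $\GL_6(2)$-conjugacy, all odd-order subgroups $E\le\GL_6(2)$; this is a finite computation, easily produced in GAP, and it gives exactly the list of inertial quotients appearing in the right-hand column of the statement. For each such $E$, I would invoke CFSG to list the possible quasisimple contributions: the relevant simple groups turn out to be $A_5$, $\SL_2(2^a)$ for $a\le 6$, and $J_1$, together with their automorphism groups and direct products built out of these. The smaller-rank classifications \cite{alp151,erd82,lin94,cra11,ea14,ea17,ar19} then allow one to dispose of every direct factor of $2$-rank $<6$. For each admissible tuple $(E,\text{components})$ I would write down the corresponding group from the list, check that its principal block has the claimed defect, inertial quotient and action on $D$, and verify that these are indeed realisations by the same reductions that produced them.

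For the actual Morita equivalences, principal blocks with abelian defect and a \emph{given} inertial quotient acting in a \emph{given} way on $D$ are controlled by their source algebra, and in the present setting Puig's theorem on nilpotent blocks, the Külshammer--Puig theory for blocks with inertial quotient, and Broué's abelian defect conjecture (known in all cases that arise here, via \cite{eali18c,ealiei18}) identify the source algebra once the local data is fixed. Non-equivalence between the listed groups is then checked with standard invariants: number of ordinary and modular irreducibles, Cartan matrices, and the fusion system, each of which is computed directly from the structure of the listed $G$.

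The main obstacle is expected to be those inertial quotients which admit several inequivalent realisations inside $\GL_6(2)$ --- particularly $C_7\rtimes C_3$, $C_{15}$, $C_{21}$, $(C_3)^2$, $C_3\wr C_3$, $C_{63}$ and the Frobenius-type groups $C_{31}\rtimes C_5$, $C_{63}\rtimes C_3$, $(C_7\rtimes C_7)\rtimes C_3$ --- where CFSG produces several competing candidates (for instance $\SL_2(8)$, $J_1$ and $\Aut(\SL_2(8))$ all contribute to the $(C_7\rtimes C_3)$-row, and $\SL_2(64)$ appears alongside a local group in the $C_{63}$-row). Here one must simultaneously (a) guarantee that \emph{every} principal block with the given local data Morita-reduces to one on the list and (b) show that no two distinct entries in the list are Morita equivalent. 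Step (b) is the reason the theorem is restricted to principal blocks: the general framework of \cite{ar20} would require Picard-group computations that are presently out of reach for arbitrary blocks, but for principal blocks they are forced by the canonical structure and can be read off from the fusion system together with the inertial quotient action on $D$, yielding the final statement about invariance of defect and inertial data under Morita equivalence.
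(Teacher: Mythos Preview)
Your overall architecture (minimal counterexample, structure of $F^*(G)$, enumerate odd-order $E\le\GL_6(2)$, CFSG for the components) matches the paper, but there are two genuine gaps.

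First, the step where you write that ``Puig's theorem on nilpotent blocks, the K\"ulshammer--Puig theory, and Brou\'e's abelian defect conjecture (known in all cases that arise here, via \cite{eali18c,ealiei18}) identify the source algebra once the local data is fixed'' does not work. The references \cite{eali18c,ealiei18} prove Donovan's conjecture, not Brou\'e's; Brou\'e for principal $2$-blocks with abelian defect is \cite{benroq}. More importantly, even granting Brou\'e, a derived equivalence with $\cO(D\rtimes E)$ does \emph{not} pin down the Morita class: for instance $((C_2)^3\rtimes C_7)\times(C_2)^3$, $\SL_2(8)\times(C_2)^3$ share inertial quotient $(C_7)_1$ but are Morita-inequivalent. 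The paper does not argue this way at all. Instead it uses the crossed-product Method~\ref{clubsuit6}: for $N\lhd G$ of odd index one views $B$ as a crossed product of the basic algebra of the covered block with $G/N$, and the possible weak-equivalence classes are parametrised by the image of $G/N$ in the Picard group of the covered block. The Picard groups needed are taken from \cite{eali18,bkl17,liv19}, and the resulting candidate list is then pruned using Lemma~\ref{iqmagic6} (the inertial quotient of $b$ is normal in that of $B$) together with Lemma~\ref{sambalecalcs6} (for principal blocks $l(B)=k(E)$, by Fong--Harris). This pruning is exactly what distinguishes principal from nonprincipal behaviour and is the substantive content of Propositions~\ref{33}--\ref{perm3}; your sketch does not supply it.

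Second, your list of simple components omits the Ree groups ${}^2G_2(3^{2m+1})$. Walter's theorem (Proposition~\ref{walter}) forces these to be considered; the paper treats them explicitly in Propositions~\ref{33} and~\ref{321}, using \cite{g2inner} to show that every automorphism of their principal block is inner. They do not contribute new entries to the final list (their principal $2$-blocks are Morita equivalent to those of $J_1$ by \cite{ea14}), but without handling them the case analysis is incomplete. Finally, the invariance of the inertial quotient in the last clause of the theorem is not ``read off from the fusion system'': the paper obtains it by combining the Craven--Rouquier derived equivalence \cite{benroq} with a Magma verification that the blocks $\cO(D\rtimes E)$ are separated by the triple $(k,l,Z)$.
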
 
Detailed data about each class of blocks is available on the Block Library \cite{ea20}.

\section{Notation and definitions} 
For a finite group $G$ and a commutative ring $R$ we denote the group algebra as $RG$. In the following, $R$ will either be $\cO$ or $k$, which are as specified above. For a block $B$ of $\cO G$ with defect group $D$, given a block $e$ of $C_G(D)$ such that the Brauer correspondent $e^G=B$ (often called a \emph{root} of $B$), we define $N_G(D, e)/C_G(D)$ to be its inertial quotient (usually denoted as $E$). It is always a $p'$-group, and if the block $B$ is nilpotent then $E=1$. We denote the idempotent that specifies the block $B$ as $e_B$; in other words, $B = \cO G e_B$. Moreover, we denote the number of irreducible characters of $KG$ contained in the block $B$ as $k(B)$, and the number of irreducible Brauer characters of $kG$ (equivalently, of simple $kG$-modules) as $l(B)$. The unique block of $\cO G$ that contains the trivial module is called the \emph{principal} block, and we denote it as $B_0(\cO G)$.

Two $R$-algebras (in particular, blocks) $B$ and $C$ are \emph{Morita equivalent} if their module categories are equivalent as $R$-linear categories. A more explicit, equivalent definition is the existence of a $(B,C)$-bimodule $M$ and of a $(C,B)$-bimodule $N$ such that $M \otimes_C N \cong B$ as $(B,B)$-bimodules and $N \otimes_B M \cong C$ as $(C,C)$-bimodules. Using bimodules to characterize Morita equivalences allows us to examine stronger equivalences by looking at special properties of said bimodules.

Two algebras are \emph{basic Morita equivalent} if there is a Morita equivalence realised by bimodules with endopermutation source. Two algebras are \emph{source algebra equivalent} (or \emph{Puig equivalent}) if the Morita equivalence is realised by bimodules with trivial source. 

If two blocks $B$ and $C$ are Morita equivalent over $\cO$, then $k(B)=k(C)$ and $l(B)=l(C)$. Moreover, the centers $Z(B)$ and $Z(C)$ are isomorphic, and $B$ and $C$ have the same Cartan matrices and decomposition matrices \cite{lin18}. The defect groups of $B$ and $C$ have the same order, exponent and $p$-rank, so in particular they are isomorphic when one of them is elementary abelian. 
If two blocks $B$ and $C$ are basic Morita equivalent then, in addition to the invariants listed above, the blocks always have isomorphic defect groups and the same fusion system (meaning, in particular, that they have the same inertial quotient). Finally, if two blocks are source algebra equivalent then they have isomorphic source algebras. 

Given a block of $\cO G$, there is a unique block of $kG$ that corresponds to it, via the map $\cO G \to kG$, $B \to \overline{B} := B \otimes_\cO k$. In particular, a Morita equivalence between blocks of $\cO G$ and $\cO H$ induces a Morita equivalence between the corresponding blocks of $kG$ and $kH$. The converse is not known to hold, so a classification over $\cO$ is stronger. There is, however, no known counterexample.

The Picard group of a block $B$ is defined as the group of $(B,B)$-bimodules that induce a self-Morita equivalence of $B$. The group operation is given by the tensor product. In general, blocks of $kG$ can have infinite Picard groups but, as shown in \cite{eisele19}, blocks of $\cO G$ always have a finite Picard group. By definition, the Picard group of a block is invariant under Morita equivalences. There is always an injective map from the outer automorphism group of a block to the Picard group of said block, which can be used to control the outer automorphism group of a block of which only the Morita equivalence class is known. We are also interested in a particular subgroup: we define $\cT(B) \leq \Pic(B)$ to be the subgroup of the bimodules with trivial source. The main theorem of \cite{bkl17} gives a bound on the size of this subgroup. Note that, by definition, $\cT(B)$ is invariant under source algebra equivalences.

\section{Reductions and technical lemmas}
Given a group $G$ and a normal subgroup $N$, we say that a block $B$ of $\cO G$ covers a block $b$ of $\cO N$ if there is a module in $B$ such that the decomposition of its restriction to $\cO N$ has a summand in $b$, or equivalently if $e_B e_b \neq 0$. This relation is the main tool we use to obtain our classification, as the structures of $B$ and $b$ are closely related: as shown in \cite[15.1]{alp151}, the blocks covered by $B$ are in a single $G$-orbit ($G$ acts by conjugation), and the defect groups of $b$ are of the form $D \cap N$ where $D$ is a defect group of $B$. We remark that, since the restriction of the trivial module to a normal subgroup is still a trivial module, the principal block of $G$ always covers the principal block of $N$.

The following are standard reductions in modular representation theory.
\begin{theorem}[Fong's First Reduction \cite{lin18}] \label{fong16}
Let $G$ be a finite group, and let $N$ be a normal subgroup of $G$. Let $b$ be a block of $\cO N$ and $B$ be a block of $\cO G$  that covers $b$. Let $H$ be the stabiliser of $b$ in $G$ acting by conjugation. Then there is a one-to-one correspondence between the blocks of $\cO G$ that cover $b$ and the blocks of $\cO H$ that cover $b$, where each block is source algebra equivalent to its correspondent.  In particular, if $B$ is principal then so is its correspondent.\end{theorem}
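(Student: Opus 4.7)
The plan is to set up the classical Fong--Reynolds correspondence at the level of block idempotents and then upgrade it to a source algebra equivalence using an induction-type bimodule. Let $e_b \in Z(\cO N)$ be the block idempotent of $b$, and let $f := \sum_{g \in [G/H]} {}^g e_b$, which is $G$-invariant and hence central in $\cO G$; its summands are the primitive idempotents of $Z(\cO N)$ corresponding to the $G$-conjugates of $b$, and they are pairwise orthogonal. A block $B$ of $\cO G$ covers $b$ precisely when $e_B f = e_B$. For such a $B$, I would set $e_{B'} := e_B e_b$: this is central in $\cO H$ (since $e_b$ is $H$-fixed and $e_B$ is $G$-central), satisfies $\sum_{g \in [G/H]} {}^g e_{B'} = e_B$, and a short argument shows it is primitive in $Z(\cO H)$, hence defines a block $B'$ of $\cO H$ covering $b$. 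The map $B \mapsto B'$ and the map sending $B'$ to the block containing $\Tr^G_H(e_{B'})$ are then mutually inverse, giving the bijection.

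For the source algebra statement, I would consider the $(\cO G e_B, \cO H e_{B'})$-bimodule $M := \cO G e_{B'} = e_B \cO G e_{B'}$ together with $N := e_{B'} \cO G e_B$. Since $\cO G$ is free as a right $\cO H$-module on any transversal of $H$ in $G$, and since $\Tr^G_H(e_{B'}) = e_B$, the multiplication maps yield bimodule isomorphisms $M \otimes_{\cO H e_{B'}} N \cong \cO G e_B$ and $N \otimes_{\cO G e_B} M \cong \cO H e_{B'}$, so $M$ realises a Morita equivalence. To promote this to a source algebra equivalence, I would pick a defect $B$-subpair $(D, e_D)$ with $D \le H$ (possible because $D \, C_G(D) \le H$), and verify that $M$, realised as a direct summand of $\cO G \otimes_{\cO H} \cO H e_{B'}$, has vertex the diagonal subgroup $\Delta D$ and trivial source, yielding the desired Puig equivalence.

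Finally, for the principal block case, if $B = B_0(\cO G)$ then $B$ covers $B_0(\cO N)$, so we may take $b = B_0(\cO N)$. The trivial $kG$-module $k_G$ lies in $B$; its restriction to $kH$ is the trivial $kH$-module $k_H$, and under the Fong--Reynolds bijection restriction from modules in $B$ lands in the correspondent block $B'$, forcing $B' = B_0(\cO H)$. The main technical obstacle is the source algebra assertion: the idempotent bijection and the bimodule-level Morita equivalence are essentially formal, but identifying the vertex and source of $M$ as an $\cO[G \times H^{\mathrm{op}}]$-module requires a careful analysis of defect groups of bimodules and of compatible maximal Brauer pairs, which is the content of Puig's refinement of the original Fong--Reynolds theorem.
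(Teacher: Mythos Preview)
Your sketch is correct and essentially reconstructs the argument that the paper merely cites: the paper's own proof is two sentences, pointing to Proposition~6.8.3 of \cite{lin18} for the bijection and source algebra equivalence, and invoking Brauer's third main theorem together with the uniqueness of the Fong--Reynolds correspondent for the principal-block statement. Your idempotent calculation ($e_{B'}=e_Be_b$, primitivity via the trace map, the bimodule $M=e_B\cO G e_{B'}$) is exactly the standard route recorded in Linckelmann's book, and you rightly flag that the only substantive work lies in checking that $M$ has vertex $\Delta D$ and trivial source.

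The one genuine difference is in the principal-block clause. The paper argues indirectly: $B_0(\cO H)$ covers $b=B_0(\cO N)$, Brauer's third main theorem gives $(B_0(\cO H))^G=B_0(\cO G)=B$, and uniqueness of the correspondent forces $B'=B_0(\cO H)$. You instead trace the trivial module through the Morita equivalence. That works, but your phrasing ``restriction from modules in $B$ lands in the correspondent block $B'$'' is not literally true in general; the inverse functor is $e_{B'}\Res^G_H(-)$, not bare restriction. The argument is rescued here because $e_{B'}=e_Be_b$ acts as the identity on $k_G$ (both $e_B$ and $e_b$ do), so $e_{B'}\Res^G_H k_G=k_H$ and hence $k_H$ lies in $B'$. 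It would be worth making that step explicit. The paper's Brauer-III route avoids this subtlety entirely, at the cost of importing a separate theorem; your route is more self-contained once the bimodule equivalence is in hand.
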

\begin{proof}
The first claim can be found, among many others, in Proposition 6.8.3 in \cite{lin18}. The fact that $C$ is a principal block follows from the uniqueness of $C$ and Brauer's third main theorem.
\end{proof}

We use the following version of Fong's Second Reduction when $G$ has a normal $p'$-subgroup.
\begin{theorem}[\cite{kupu90}] 
Let $G$ be a finite group and $N \lhd G$. Let $B$ be a block of $\cO G$ with defect group $D$ that covers a $G$-stable nilpotent block $b$ of $\cO N$ with defect group $D \cap N$. Then there are finite groups $M \lhd L$ such that $M \cong D \cap N$, $L/M \cong G/N$, there is a subgroup $D_L \leq L$ with $D_L \cong D$ and $M \leq D_L$, and there is a central extension $\tilde{L}$ of $L$ by a $p'$-group, and a block $\tilde{B}$ of $\cO \tilde{L}$ which is Morita equivalent to $B$ and has defect group $\tilde{D} \cong D_L \cong D$.
If $B$ is the principal block, then so is $\tilde{B}$.
\end{theorem}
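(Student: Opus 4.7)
The plan is to combine Puig's structure theorem for nilpotent blocks with a cohomological untwisting via a central $p'$-extension, following the strategy of Külshammer and Puig.

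First, since $b$ is a nilpotent block of $\cO N$ with defect group $M := D \cap N$, Puig's theorem on nilpotent blocks produces a source idempotent $i$ such that the source algebra $ibi$ is isomorphic, as an interior $M$-algebra, to $\cO M$; this already yields a Morita equivalence between $b$ and $\cO M$. Because $b$ is $G$-stable, conjugation by $G$ preserves $b$ and, after adjusting $i$ up to inner automorphism of the source algebra, produces a well-defined outer action $G/N \to \Out(\cO M)$ together with an associated $2$-cocycle $\alpha \in H^2(G/N, k^\times)$ that measures the failure of this outer action to lift to a genuine action on $\cO M$.

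Second, I would use the standard analysis of blocks covering a nilpotent block to conclude that $B$ is Morita equivalent to a twisted crossed product of $\cO M$ by $G/N$ along $\alpha$. The untwisted group extension of $G/N$ by $M$ realizing the outer action is precisely the group $L$ in the statement: $M \lhd L$ with $L/M \cong G/N$. Lifting a Sylow $p$-subgroup of $G/N$ through $L \to G/N$ and multiplying by $M$ gives a subgroup $D_L \leq L$ with $D_L \cong D$ and $M \leq D_L$.

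Third, I would untwist $\alpha$ by means of a central extension. The class $[\alpha] \in H^2(G/N, k^\times)$ has finite order coprime to $p$ (via a restriction argument to the defect group together with the fact that the block is defined over $\cO$), so there is a finite central extension $\tilde L \to L$ by an abelian $p'$-group $Z$ such that the pullback of $\alpha$ to $\tilde L / M$ becomes a coboundary. Taking $\tilde B$ to be the block of $\cO \tilde L$ lying over the linear character of $Z$ that witnesses this, one obtains a block of a genuine group algebra Morita equivalent to the crossed product, hence to $B$, with defect group $\tilde D \cong D_L \cong D$. If $B$ is principal, then tracing the trivial module through the Morita equivalence, together with Brauer's third main theorem, identifies $\tilde B$ as the principal block of $\cO \tilde L$. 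The main obstacle — and the core technical content of \cite{kupu90} — is this third step: ensuring that the chosen central $p'$-extension is such that the resulting Morita equivalence between $B$ and $\tilde B$ is realized by an honest bimodule of group algebras, rather than just a Morita equivalence of twisted crossed products.
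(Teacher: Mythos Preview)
The paper does not actually prove this statement: its entire proof reads ``Guidance on the extraction of this result from \cite{kupu90} can be found in \cite[2.2]{ea14}, and the last claim is proved in \cite[2.2]{ea17}.'' So you are attempting considerably more than the paper does, namely an outline of the K\"ulshammer--Puig argument itself. As a sketch of the main body of that theorem your first three paragraphs are a reasonable summary of the strategy, though you gloss over why the preimage $D_L$ of $DN/N$ in $L$ is isomorphic to $D$ rather than merely an extension of $D/M$ by $M$; this requires matching extension classes and is part of the nontrivial content of \cite{kupu90}.

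There is, however, a genuine gap in your final sentence. ``Tracing the trivial module through the Morita equivalence'' does not show that $\tilde B$ is principal: a Morita equivalence sends the trivial $B$-module to \emph{some} simple $\tilde B$-module, but there is no mechanism in your argument forcing that module to be the trivial $\cO\tilde L$-module, and Brauer's third main theorem does not help here since $\tilde L$ is not a subgroup of $G$. The argument actually used (see \cite[2.2]{ea17}) is structural rather than module-theoretic: if $B$ is principal then so is $b$, and a nilpotent principal block forces $N$ to have a normal $p$-complement $K$; since $K$ is characteristic in $N$ it is normal in $G$, and one may pass to $G/K$, where the covered block is now the group algebra of the normal $p$-subgroup $N/K \cong M$. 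In this situation no twist is needed, the central extension can be taken trivial, and $\tilde B$ is visibly the principal block. Your cohomological untwisting route does not by itself recover this conclusion.
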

\begin{proof}
Guidance on the extraction of this result from \cite{kupu90} can be found in \cite[2.2]{ea14}, and the last claim is proved in \cite[2.2]{ea17}.
\end{proof}
In particular, we use the following immediate consequence:
\begin{corollary}[\cite{ea17}] \label{kulshammerpuigcorollary6}
Let $G$ be a finite group, let $N \lhd G$ with $N \not\leq Z(G)O_p(G)$. Let $B$ be a quasiprimitive $p$-block of $\cO G$ covering a nilpotent block $b$ of $\cO N$. Then there is a finite group $H$ with $[H:O_{p'}(Z(H))] < [G:O_{p'}(Z(G))]$ and a block $B_H$ with isomorphic defect group to the one of $B$, such that $B_H$ is Morita equivalent to $B$.
\end{corollary}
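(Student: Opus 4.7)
The plan is to apply the Fong--Külshammer--Puig theorem of \cite{kupu90} (Fong's Second Reduction, stated as the unnumbered theorem just above) directly, and then verify the resulting index inequality. Since $B$ is quasiprimitive, every block of every normal subgroup that $B$ covers is $G$-stable; in particular the nilpotent block $b$ of $\cO N$ is $G$-stable, so the hypotheses of that theorem are met. It produces a finite group $\tilde L$ which is a central $p'$-extension of a group $L$ containing a normal $p$-subgroup $M\cong D\cap N$ with $L/M\cong G/N$, together with a block $\tilde B$ of $\cO\tilde L$ that is Morita equivalent to $B$ and has defect group isomorphic to $D$. Setting $H:=\tilde L$ and $B_H:=\tilde B$ immediately yields the Morita equivalence and the defect-group statement of the corollary.

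It remains to show the strict inequality $[H:O_{p'}(Z(H))] < [G:O_{p'}(Z(G))]$. Let $Z:=\ker(\tilde L\twoheadrightarrow L)$; this is a central $p'$-subgroup of $\tilde L$, hence contained in $O_{p'}(Z(H))$, so
\[
[H:O_{p'}(Z(H))] \;\leq\; |\tilde L|/|Z| \;=\; |L| \;=\; |G/N|\cdot|D\cap N|.
\]
Combined with $[G:O_{p'}(Z(G))]=|G/N|\cdot|N|/|O_{p'}(Z(G))|$, the target reduces to the strict inequality $|D\cap N|\cdot|O_{p'}(Z(G))|<|N|$. To extract this from the hypothesis $N\not\leq Z(G)O_p(G)$, one uses that $N\cap O_p(G)$, being a normal $p$-subgroup of $N$, is contained in every defect group of every block of $\cO N$ and hence in $D\cap N$; and that $O_{p'}(Z(G))\cap N$ is a central $p'$-subgroup of $N$ of coprime order to $N\cap O_p(G)$. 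A product decomposition of $N\cap Z(G)O_p(G)$ together with the strict containment $|N| > |N\cap Z(G)O_p(G)|$ supplied by the hypothesis delivers the inequality, provided one further shows that the gap factor $|O_{p'}(Z(G)):O_{p'}(Z(G))\cap N|$ lifts via $O_{p'}(Z(G))N/N \hookrightarrow Z(G/N)\cong Z(L/M)$ into $O_{p'}(Z(\tilde L))$, thereby tightening the crude bound on $[H:O_{p'}(Z(H))]$.

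The main obstacle is precisely this last piece of bookkeeping: showing that $O_{p'}(Z(\tilde L))$ genuinely captures not only the Fong--Külshammer--Puig kernel $Z$ but also the image of the central $p'$-part of $G$ inside the quotient $L/M\cong G/N$. This step requires unwinding the explicit cocycle construction underlying the preceding theorem, and is the substance of the proof of the analogous statement \cite[Corollary 2.3]{ea17}, which we follow.
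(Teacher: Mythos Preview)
Your proposal is correct and follows the same route as the paper. The paper itself gives no proof for this corollary: it simply labels it an ``immediate consequence'' of the K\"ulshammer--Puig theorem and cites \cite{ea17}. Your outline---apply Fong--K\"ulshammer--Puig to obtain $\tilde L$, then establish the index inequality by tracking how much of $O_{p'}(Z(G))$ survives into $O_{p'}(Z(\tilde L))$, with the cocycle bookkeeping deferred to \cite[2.3]{ea17}---is precisely what lies behind that citation, and you have correctly isolated the one step (the lifting of the ``gap factor'' into the centre of $\tilde L$) that is not automatic.
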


Given a finite group $G$, a block $B$ of $\cO G$ with defect group $D$ is \emph{nilpotent covered} if these exists a group $\tilde{G}$ and a nilpotent block $\tilde{B}$ of $\cO \tilde{G}$ such that $G \lhd \tilde{G}$ and $\tilde{B}$ covers $B$. A block $B$ is said to be \emph{inertial} if it is basic Morita equivalent to its Brauer correspondent in $N_G(D)$. The following proposition relates these concepts.
\begin{lemma}[\cite{pu11}, \cite{zhou16}]  \label{nilpotentcovered6}
Let $G$ be a finite group and let $N \lhd G$. Let $b$ be a $p$-block of $\cO N$ covered by a block $B$ of $\cO G$. Then:
\begin{enumerate}[label=(\arabic*)] \setlength\itemsep{0em}
\item If $B$ is inertial, then $b$ is inertial.
\item If $b$ is nilpotent covered, then $b$ is inertial.
\item If $p$ does not divide $[G:N]$ and $b$ is inertial, then $B$ is inertial.
\item If $b$ is nilpotent covered, then it has abelian inertial quotient.
\end{enumerate}
\end{lemma}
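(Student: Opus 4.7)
The four statements concern how being \emph{inertial} (i.e.\ basic Morita equivalent to the Brauer correspondent in the normalizer of a defect group) and being \emph{nilpotent covered} interact with block covering. My plan is to treat (1), (3) as transfer statements for basic Morita equivalences across normal subgroups, and then deduce (2), (4) from Puig's structure theorem for nilpotent blocks.

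For (1), I would argue as follows. By Fong's First Reduction (Theorem \ref{fong16}), after replacing $G$ by the stabiliser of $b$ we may assume $b$ is $G$-stable, so every block of $\cO G$ covering $b$ has a defect group $D$ with $D \cap N$ a defect group of $b$. If $B$ is inertial, there is a $(B, B')$-bimodule $M$ with endopermutation source realising a basic Morita equivalence with its Brauer correspondent $B'$ in $\cO N_G(D)$. The plan is to transfer this to $b$ by considering the restriction of $M$ to $N$: the key fact, due to Puig, is that basic Morita equivalences descend along covered blocks, so the summand of $\Res^{G \times G^{\mathrm{op}}}_{N \times N^{\mathrm{op}}}(M)$ lying in the $b$-part has endopermutation source and implements a basic Morita equivalence between $b$ and its Brauer correspondent $b'$ in $\cO N_N(D \cap N)$. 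Part (3) goes in the reverse direction under the hypothesis $p \nmid [G:N]$: here $D_b = D \cap N$ is a defect group of $B$ as well, and starting from a $(b,b')$-bimodule with endopermutation source I would induce up to $G \times N_G(D)^{\mathrm{op}}$ and extract the $(B,B')$-component. The coprimality of $[G:N]$ with $p$ ensures that the induced module retains the endopermutation source property (since the source of the induced module, up to a direct summand, is still an endopermutation module of $D$), yielding that $B$ is inertial.

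For (2), suppose $\tilde G \supseteq G$ is a group in which a nilpotent block $\tilde B$ covers $b$. By Puig's theorem on nilpotent blocks, $\tilde B$ is Puig equivalent (in particular, basic Morita equivalent) to $\cO D$, where $D$ is a defect group of $\tilde B$; equivalently, $\tilde B$ is inertial because its Brauer correspondent in $N_{\tilde G}(D)$ is also Puig equivalent to $\cO D$. Applying part (1) to the pair $(G, \tilde G, \tilde B, b)$ — which is legitimate once we reduce to the $G$-stable situation via Fong's First Reduction inside $\tilde G$ — we conclude that $b$ is inertial. This deduction is the cleanest argument and illustrates why (2) is essentially a corollary of (1) plus Puig's nilpotent block theorem.

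For (4), I would use the source algebra description given by Puig more carefully. Again reducing so $b$ is $\tilde G$-stable, the source algebra of $\tilde B$ is of the form $\cO D$, and the action of $\tilde G/N$ on this source algebra factors through automorphisms of $D$ that centralize the trivial fusion of $\tilde B$. Restricting to $G/N$ and then passing to $N_G(D_b, e_b)/C_N(D_b)$, the inertial quotient of $b$ embeds into a subgroup of $\Aut(D_b)$ coming from a single coset-conjugation action, which is forced to be abelian because it lives inside the group of outer automorphisms of the nilpotent source algebra $\cO D_b$, compatible with the fusion system of $\tilde B$. The main obstacle I expect is making (4) precise: the argument requires tracking how the $\tilde G$-action on the source algebra of $\tilde B$ restricts to give the inertial quotient of $b$, and proving that the resulting subgroup of $\Aut(D_b)$ is abelian uses a non-trivial rigidity property of nilpotent source algebras proved in the references cited. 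Parts (1)--(3) in contrast follow a standard transfer-of-basic-Morita-equivalences template.
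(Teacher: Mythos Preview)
The paper does not prove this lemma; it records that (1) and (2) are Theorem~3.13 and (4) is Corollary~4.3 of \cite{pu11}, while (3) is the main theorem of \cite{zhou16}. So your proposal attempts more than the paper does, but in doing so it glosses over exactly the content of those cited results.

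Your reduction of (2) to (1) is sound: a nilpotent block is source-algebra equivalent to $\cO D$ by Puig's theorem, hence inertial, and then (1) applied to $N \lhd \tilde N$ yields that $b$ is inertial. (Your notation slips here: the overgroup in the definition of ``nilpotent covered'' contains $N$, not $G$, so the pair is $(N,\tilde N)$, not $(G,\tilde G)$; this is cosmetic.)

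The sketches for (1) and (3), however, are not proofs but restatements of the conclusions. For (1), restricting a Morita bimodule $M$ to $N \times N^{\mathrm{op}}$ does not automatically yield a Morita equivalence between $b$ and its Brauer correspondent in $\cO N_N(D\cap N)$: one must identify the correct summand, show it is a progenerator on both sides with endopermutation source, and verify the target block is the right one. Your phrase ``basic Morita equivalences descend along covered blocks'' is precisely Puig's Theorem~3.13, not an input to it. For (3), the claim that induction along a $p'$-extension preserves the endopermutation-source property is neither obvious nor true as a general statement about modules; Zhou's argument in \cite{zhou16} works at the level of interior algebras and the Dade group, and the fact that this fills an entire paper indicates the one-line induction sketch is not adequate. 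For (4) you yourself flag the gap, and indeed Puig's Corollary~4.3 relies on his full analysis of extensions of nilpotent blocks to force the inertial quotient to be abelian; your outline points in the right direction but does not isolate the mechanism.

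In short: (2) follows from (1) as you say, but (1), (3), (4) are the theorems being cited, and your sketches do not supply their proofs.
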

\begin{proof}(1), (2) are Theorem 3.13, and (4) is Corollary 4.3 in \cite{pu11}. (3) is the main theorem of \cite{zhou16}.
\end{proof}

Given a finite group $G$ and a normal subgroup $N$, and given a block $b$ of $\cO N$, the group $G$ acts by conjugation on the block $\overline{b}$ of $kN$. We define the subgroup $G[b] \leq G$ as the kernel of the map $G \to \Out(\overline{b})$. By definition, $N \leq G[b]$, and $G[b]$ is a normal subgroup of $G$. 

\begin{lemma} \label{inneraut6}
Let $G$ be a finite group and $B$ a block of $\cO G$ with defect group $D$. Let $N$ be a normal subgroup of $G$ that contains $D$, and suppose that $B$ covers a $G$-stable block $b$ of $\cO N$. Let $\hat{b}$ be a block of $\cO G[b]$ covered by $B$. Then: 
\begin{enumerate}[label=(\arabic*)]\setlength\itemsep{0em}
\item $b$ is source algebra equivalent to $\hat{b}$.
\item $B$ is the unique block of $\cO G$ that covers $\hat{b}$.
\end{enumerate}
\end{lemma}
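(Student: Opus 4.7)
The plan is to deduce both parts from Puig's theory, exploiting the defining property of $G[b]$ as the kernel of the action $G \to \Out(\overline{b})$. Note first that $D \leq N$ gives $D \cap N = D$, so $b$ has defect group $D$, and since $\hat{b}$ is covered by $B$ and is a block of a subgroup containing $D$, it also has defect group $D$. In particular $b$ and $\hat{b}$ share a common defect group, which is what makes a source algebra comparison meaningful.

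For (1), by construction every $g \in G[b]$ acts on $\overline{b}$ by an inner automorphism. Combined with $G[b]$-stability of $b$ and the identification of defect groups above, this is precisely the hypothesis of Puig's extension theorem, which asserts that in this situation the block $\hat{b}$ of $\cO G[b]$ covering $b$ is source algebra equivalent to $b$. Concretely, choosing a source idempotent $i \in (\cO N)^D$ for $b$ with $\Br_D(i) \neq 0$, the bimodule $e_{\hat{b}} \cO G[b] \, i$ can be verified to realise a source algebra equivalence by a direct computation: the inner nature of the $G[b]/N$-action on $\overline{b}$ ensures that $i$ remains a source idempotent on the $\hat{b}$ side.

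For (2), I would proceed in two steps. First I would show $\hat{b}$ is $G$-stable. Since $G[b] \lhd G$, each conjugate $\hat{b}^g$ is again a block of $\cO G[b]$ covered by $B^g = B$, hence covers $b^g = b$; using the source algebra equivalence of (1) transported along $g$ and the fact that the source algebra data of $b$ is $G$-invariant (because $b$ itself is $G$-stable), the conjugates $\hat{b}^g$ cannot be distinguished from $\hat{b}$ and so must coincide with it. Second, with $\hat{b}$ now $G$-stable, the blocks of $\cO G$ covering it are parameterised, via the standard Clifford-theoretic correspondence through $e_{\hat{b}} Z(\cO G)$, by blocks of a twisted group algebra of $G/G[b]$ over $Z(\overline{\hat{b}})$; the defining property of $G[b]$ — that $G/G[b]$ embeds into $\Out(\overline{b})$ with non-inner action — forces this twisted group algebra to contain a single block, giving the required uniqueness of $B$.

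The main obstacle is making the second step in (2) precise: one must track how the non-inner action of $G/G[b]$ on $\overline{b}$ propagates to the structure of $\overline{\hat{b}}$, and then translate this into an assertion about counting blocks of $\cO G$ covering $\hat{b}$. The cleanest way is to invoke directly the relevant statements of Puig concerning pointed groups and extensions of blocks with trivial outer action, reducing both (1) and (2) to citations once the hypotheses have been verified in the language of that theory.
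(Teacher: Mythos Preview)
Your approach to part (1) is essentially the paper's: the paper cites \cite[2.2]{kekoli} for the source algebra equivalence over $k$ (which is exactly the statement that the inner action of $G[b]/N$ on $\overline b$ forces $\overline{\hat b}$ and $\overline b$ to share a source algebra), and then lifts to $\cO$ via \cite[7.8]{pu88b}. Your sketch with the source idempotent $i$ is the right shape, though in practice one simply invokes these references rather than redoing the computation.

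For part (2) the paper does something much shorter: it cites \cite[3.5]{murai12} and stops. Your route via $G$-stability plus a twisted-group-algebra count is a reasonable strategy, but the argument as written has a real gap. The step ``the conjugates $\hat b^g$ cannot be distinguished from $\hat b$ and so must coincide with it'' is not valid: two distinct blocks of $\cO G[b]$ can perfectly well be source algebra equivalent to the same block $b$, so source-algebra indistinguishability does not force equality. What you actually need is the stronger fact that $\hat b$ is the \emph{unique} block of $\cO G[b]$ covering $b$; this does follow from the defining property of $G[b]$ (the inner action of $G[b]/N$ on $\overline b$ makes $e_b$ remain a primitive central idempotent in $\cO G[b]$), and once you have uniqueness, $G$-stability of $\hat b$ is immediate since every $\hat b^g$ also covers $b^g=b$. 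Your second step then needs the further observation that $G[\hat b]=G[b]$ (so that $G/G[b]$ genuinely acts by outer automorphisms on $\overline{\hat b}$, not just on $\overline b$), which again uses the source algebra equivalence from (1); only then does the twisted-group-algebra count collapse to a single block. All of this is packaged inside Murai's result, which is why the paper simply cites it.
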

\begin{proof}
From \cite[2.2]{kekoli} there is a source algebra equivalence between $\overline{\hat{b}}$ and $\overline{b}$, that lifts to $\hat{b}$ and $b$ by \cite[7.8]{pu88b}.
Part (2) follows from 3.5 in \cite[3.5]{murai12}.
\end{proof}
We can define $G[b]_\cO$, the subgroup of elements that act as inner automorphisms on $b$. Note that, using the canonical map $\cO G \to kG$, $G[b]_\cO \leq G[b]$.

The following fact about principal blocks covering each other is useful.
\begin{lemma} \label{iqmagic6}
Let $G$ be a finite group, and let $N \lhd G$ such that $p \not|\; [G:N]$. Let $E_b$ be the inertial quotient of the principal block of $\cO N$, and $E_B$ the inertial quotient of the principal block of $\cO G$. Then $E_b$ is a normal subgroup of $E_B$.
\end{lemma}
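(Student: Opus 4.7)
The plan is to make the inertial quotients completely explicit for principal blocks and then reduce the statement to an elementary normal-subgroup calculation inside $N_G(D)$.

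First I would pin down a common defect group. Because $B$ is the principal block of $\cO G$, its defect group $D$ is a Sylow $p$-subgroup of $G$; since $p \nmid [G:N]$, we have $D \leq N$ and $D$ is also a Sylow $p$-subgroup of $N$, hence a defect group of the principal block $b$ of $\cO N$. Next I would fix canonical roots: take $e_B$ to be the principal block of $C_G(D)$ and $e_b$ to be the principal block of $C_N(D) = C_G(D)\cap N$. By Brauer's third main theorem, $e_B^G = B$ and $e_b^N = b$, so these are legitimate roots. Since conjugation always fixes the principal block, we get $N_G(D,e_B) = N_G(D)$ and $N_N(D,e_b) = N_N(D)$, whence
\[
E_B \;=\; N_G(D)/C_G(D), \qquad E_b \;=\; N_N(D)/C_N(D) \;=\; (N_G(D)\cap N)/(C_G(D)\cap N).
\]

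Now I would identify $E_b$ with a subgroup of $E_B$ via the natural map. The inclusion $N_G(D)\cap N \hookrightarrow N_G(D)$ composed with the quotient by $C_G(D)$ has kernel $(N_G(D)\cap N)\cap C_G(D) = C_G(D)\cap N = C_N(D)$, so it induces an injection
\[
E_b \;\hookrightarrow\; E_B, \qquad \text{with image} \quad \bigl((N_G(D)\cap N)\, C_G(D)\bigr)/C_G(D).
\]

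Finally I would verify normality. Because $N \lhd G$, the intersection $N_G(D)\cap N$ is normal in $N_G(D)$; the centraliser $C_G(D)$ is also normal in $N_G(D)$; the product of two normal subgroups is normal, so $(N_G(D)\cap N)\,C_G(D) \lhd N_G(D)$, and passing to the quotient by $C_G(D)$ gives $E_b \lhd E_B$. There is no real obstacle here: the only subtlety is choosing the roots of $B$ and $b$ coherently so that the embedding $E_b \hookrightarrow E_B$ is literally given by inclusion of coset representatives, and for principal blocks this is automatic via Brauer's third main theorem.
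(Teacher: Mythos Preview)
Your proof is correct and follows essentially the same approach as the paper: both arguments identify $E_B = N_G(D)/C_G(D)$ and $E_b = N_N(D)/C_N(D)$ via the stability of the principal root, embed $E_b$ into $E_B$ by the map $n \mapsto nC_G(D)$ with kernel $C_N(D)$, and conclude normality from $N_N(D) \lhd N_G(D)$ and $C_G(D) \lhd N_G(D)$. Your version is slightly more explicit in invoking Brauer's third main theorem to justify the choice of roots, but the substance is identical.
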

\begin{proof}
Recall that for a principal block, the inertial quotient is just $E= N_G(D)/C_G(D)$. In fact, the root $e$ of $B_0(\cO G)$ in $C_G(D)$ contains the restriction of the trivial module, which is irreducible and $G$-stable.

Consider the map $N_N(D) \to N_G(D)/C_G(D)$ defined by $n \mapsto nC_G(D)$. Since $nC_G(D) = 1$ only if $n \in (N \cap C_G(D))$, the kernel of this homomorphism is $C_N(D)$. By the first isomorphism theorem, $E_b \leq E_B$. Now $N_N(D)/C_N(D) \cong N_N(D)C_G(D)/C_G(D)$ and, since $N_N(D) \lhd N_G(D)$ and $C_G(D) \lhd N_G(D)$, we are done.
\end{proof}

Now we focus on the case when $p=2$, and $D= (C_2)^6$. We need to examine blocks with a smaller elementary abelian defect group, and in particular we highlight this result on blocks with a Klein four defect group:
\begin{proposition}[\cite{erd82}, \cite{lin94}, \cite{cra11}]\label{classification6}
Let $G$ be a finite group and let $B$ be a block of $\cO G$ with defect group $D = (C_2)^2$. Then $B$ is source algebra equivalent to $\cO (C_2)^2$, $\cO A_4$ or $B_0(\cO A_5)$. 
\end{proposition}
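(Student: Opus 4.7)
The plan is to split into cases according to the inertial quotient $E$ of $B$. Since $E$ acts faithfully on $D = (C_2)^2$ and has order prime to $2$, we get an embedding $E \hookrightarrow \Aut(D) \cong S_3$, so $E$ is either trivial or isomorphic to $C_3$. This reduces the question to two cases, each giving a single source algebra equivalence class.

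For $E = 1$, the block $B$ is nilpotent, since the defect group is abelian and the inertial quotient is trivial. Puig's theorem on nilpotent blocks then gives immediately that $B$ is source algebra equivalent to $\cO D = \cO (C_2)^2$, accounting for the first class in the statement.

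For $E = C_3$ I would follow the historical three-step route signalled by the citations. First, Brauer's formulas applied to the subsections coming from $D$ force $k(B) = 4$ and $l(B) = 3$, and determine the Cartan matrix. Erdmann's classification of tame blocks then pins down the basic algebra of $\overline{B}$ up to Morita equivalence over $k$: the only possibilities are $k A_4$ and $B_0(k A_5)$. Second, Linckelmann's lifting argument upgrades this to the statement that $B$ itself is Morita equivalent over $\cO$ to exactly one of $\cO A_4$ or $B_0(\cO A_5)$, using that the lift of the basic algebra from $k$ to $\cO$ is unique in this situation. Third, Craven--Eaton--Kessar--Linckelmann refine the Morita equivalence to a source algebra (Puig) equivalence; their argument reduces, via the classification of finite simple groups, to handling the quasisimple constituents, where the source algebra can be controlled directly.

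The main obstacle is Erdmann's step: identifying the basic algebra requires the machinery of tame symmetric algebras of dihedral/quaternion type and a case analysis on quivers with relations compatible with the prescribed Cartan matrix and Loewy structure. The source algebra refinement in the third step is technically involved but conceptually downstream once the Morita equivalence is in place. I would not reprove any of this here; instead I would cite \cite{erd82}, \cite{lin94} and \cite{cra11} for the three steps and simply assemble them into the stated trichotomy.
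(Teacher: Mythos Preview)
Your proposal is correct and matches the paper's treatment: the paper gives no proof for this proposition beyond the bracketed citations \cite{erd82}, \cite{lin94}, \cite{cra11}, so your sketch of what each reference contributes (Erdmann for the Morita classification over $k$, Linckelmann for the lift to $\cO$, Craven--Eaton--Kessar--Linckelmann for the upgrade to a source algebra equivalence) is exactly the intended assembly, just made more explicit than in the paper itself.
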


Now we list the possible inertial quotients for a block with defect group $(C_2)^6$ by looking at conjugacy classes of subgroups of $\GL_6(2)$ of odd order, computed with Magma \cite{magma}.

For each listed group, the group algebra $\cO(D\rtimes E)$ is a block, and for each different action of $E$ this block lies in its own corresponding Morita equivalence class.

We use the notation $F_{21} = C_7 \rtimes C_3$ and we denote $\operatorname{SmallGroup}(147,j)$ as $(C_7)^2 \rtimes_j C_3$ to distinguish the two different nontrivial semidirect products, that both arise as possible inertial quotients. In order to distinguish different actions of subgroups in the same isomorphism class, that correspond to conjugacy classes of subgroups in $\GL_6(2)$, we denote them as follows: \begin{itemize} \setlength\itemsep{0em}
\item $(C_3)_1$ is the one with $|C_D(E)|=2^4$.
\item $(C_3)_2$ is the one with $|C_D(E)|=2^2$.
\item $(C_3)_3$ is the one that fixes no nontrivial element of $D$. It is generated by the $21^{\operatorname{st}}$ power of the Singer cycle in $\GL_6(2)$.
\item $(C_3)^2_1$ is the one with $|C_D(E)|=2^2$. 
\item $(C_3)^2_2$ is the one realised by the unique subgroup $C_3 \times C_3$ of $C_{63} \rtimes C_3$.
\item $(C_3)^3_2$ is the one realised in $((C_2)^4 \rtimes (C_3)_2) \times ((C_2)^2 \rtimes (C_3)_1)$. 
\item $(C_7)_1$ is the one with $|C_D(E)|=2^3$.
\item $(C_7)_2$ is generated by the $9^{\operatorname{th}}$ power of the Singer cycle in $\GL_6(2)$.
\item $(C_7)_3$ is a free action on $D$ defined as follows: let $x$ be a Singer cycle of $\GL_6(2)$, and let $y=x^9$. Let $D=D_1 \times D_2$ with $D_i = (C_2)^3$. Then $(C_7)_3$ is defined as $C_7$ acting as $x$ on $D_1$, and as $x^3$ on $D_2$.
\item $(C_{15})_1$ is the one with $C_D(E)=1$, but $C_D(C_3)=(C_2)^4$.
\item $(C_{15})_2$ is the one with $C_D(E) = (C_2)^2$.
\item $(C_{15})_3$ is the one with $C_D(E)=1$, and $C_D(C_3)=1$.
\item $(C_{21})_1$ is the one with $|C_D(E)|=2^3$.
\item $(C_{21})_2$ is the one that fixes no nontrivial element of $D$. It is generated by the $3^{\operatorname{rd}}$ power of the Singer cycle in $\GL_6(2)$.
\item $(F_{21})_1$ is the one with $|C_D(E)|=2^3$.
\item $(F_{21})_2$ is the one with $|C_D(E)|=2$. 
\item $(F_{21})_3$ is the one where the unique subgroup $C_7$ acts as $(C_7)_2$. 
\item $(F_{21})_4$ is the one where the unique subgroup $C_7$ acts as $(C_7)_3$. 
\item $(F_{21} \times C_3)_1$ is the one where the unique subgroup $C_{21}$ acts as $(C_{21})_1$.
\item $(F_{21} \times C_3)_2$ is the one where the unique subgroup $C_{21}$ acts as $(C_{21})_2$.
\end{itemize}
We represent the relations between inertial quotients as a diagram: there is an arrow with a dotted line between two nontrivial subgroups of odd order $H, K \leq \GL_6(2)$ whenever $K$ has a subgroup $H'$ that is isomorphic to $H$ and whose action on $D$ is the same as the one of $H$, and $[K:H']$ is an odd prime. The line is continuous if $H'$ is a normal subgroup of $K$.

\includegraphics[width=0.97\textwidth]{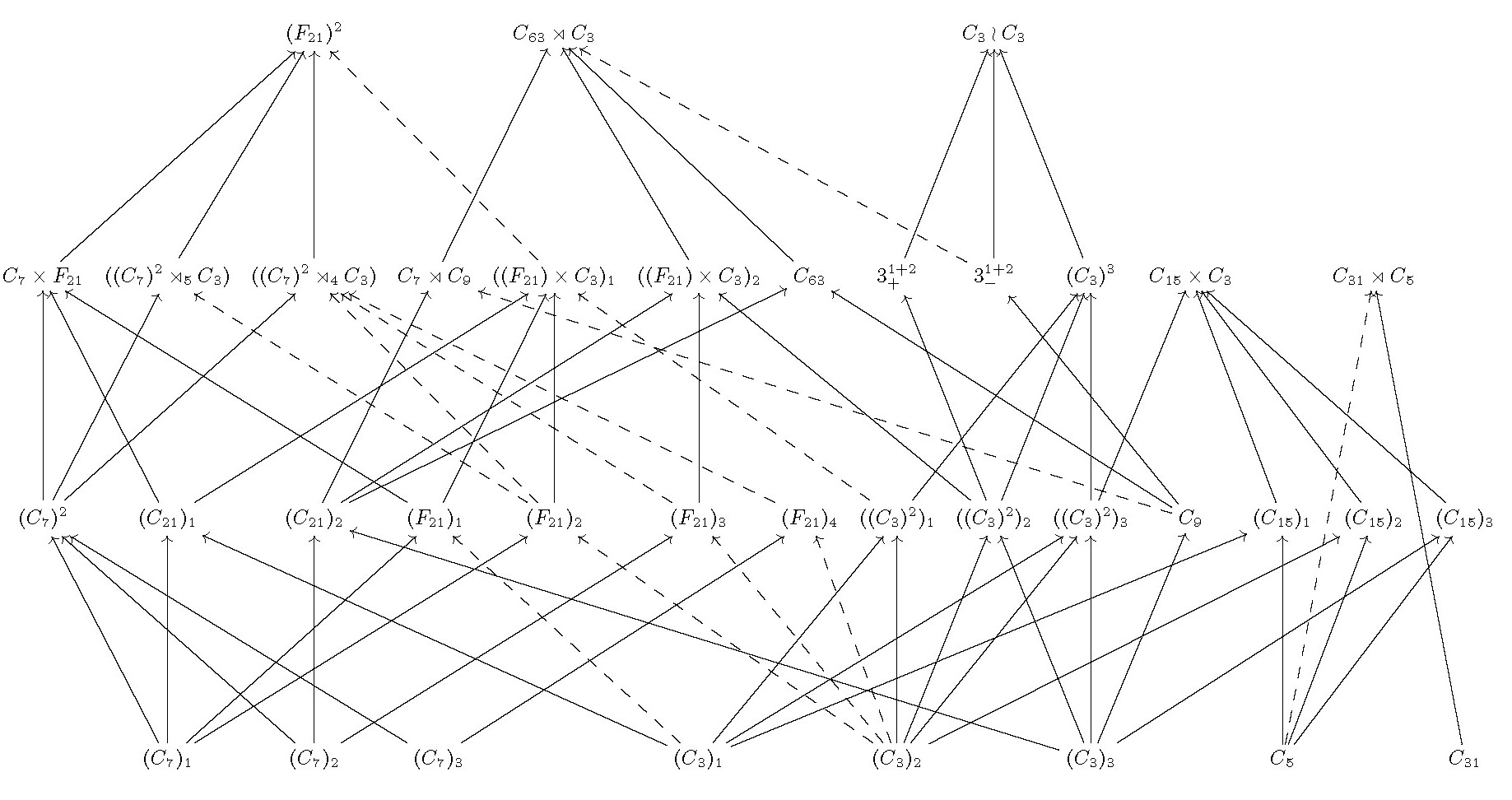}


Fong and Harris have shown in \cite{fongharris} that, when $p=2$, principal blocks with an abelian defect group are perfectly isometric to their Brauer correspondent. In particular, this enables us to determine various invariants that we use to simplify the classification process.
\begin{lemma} \label{sambalecalcs6}
Let $G$ be a finite group with an elementary abelian Sylow $2$-subgroup $D$, and let $B$ be the principal block of $\cO G$ (or $kG$). Let $E$ be the inertial quotient of $B$. Then the number of isomorphism classes of simple modules $l(B)$ is equal to the number of irreducible characters of $\cO E$, $k(E)$.

Explicitly, when $D=(C_2)^6$ all the possible pairs $(E,l(B))$ are: $(1,1)$, $(C_3,3)$, $(C_5,5)$, $(C_7,7)$, $(C_9,9)$, $(C_3 \times C_3,9)$, $(C_{15},15)$, $(C_7 \rtimes C_3,5)$, $(C_{21},21)$, $((C_3)^3,27)$, $(3^{1+2}_+,11)$, $(3^{1+2}_-,11)$, $(C_{31},31)$, $(C_{15} \times C_3,45)$, $(C_7 \times C_7,49)$, $((C_7 \rtimes C_3)\times C_3,15)$, $(C_7 \rtimes C_9,15)$, $(C_{63},63)$, $(C_3 \wr C_3,17)$, $((C_7 \times C_7) \rtimes_2 C_3),19)$, $((C_7 \times C_7) \rtimes_4 C_3),19)$, $((C_7 \rtimes C_3) \times C_7,35)$, $(C_{31} \rtimes C_5,11)$, $(C_{63}\rtimes C_3,29)$, $((C_7 \rtimes C_3)^2,25)$.
\end{lemma}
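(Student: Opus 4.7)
The plan is to apply the Fong-Harris theorem to reduce the question to the Brauer correspondent, invoke a standard fact about blocks with normal defect group, and then verify the explicit table by computing $k(E)$ for each listed inertial quotient.

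By the theorem of Fong and Harris \cite{fongharris}, the principal block $B$ is perfectly isometric (in fact, isotypic) to its Brauer correspondent $b = B_0(\cO N_G(D))$, and hence $l(B) = l(b)$. The block $b$ has $D$ as a normal defect group with inertial quotient $E$; since $D$ is a normal Sylow $2$-subgroup of $N_G(D)$, Schur-Zassenhaus yields $N_G(D) = D \rtimes L$ for some $2'$-complement $L$, and $E = L/C_L(D)$. The normal $2'$-subgroup $C_L(D)$ lies in the kernel of every character in the principal block, so $b$ is Morita equivalent to $B_0(\cO(D \rtimes E))$ with $E$ acting faithfully on $D$. Every simple $k(D \rtimes E)$-module is the inflation of a simple $kE$-module (as $D$ is a normal $2$-subgroup), and since $kE$ is semisimple with $k(E)$ simples, all of which lie in the principal block (this follows either from K\"ulshammer's theorem on blocks with normal defect group, or from a direct decomposition-matrix argument, noting that the cocycle for the principal block is trivial), one gets $l(b) = k(E)$. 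Combining, $l(B) = k(E)$.

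For the explicit list, one computes $k(E)$ for each inertial quotient $E$ enumerated in the preceding discussion (the odd-order subgroups of $\GL_6(2)$ up to conjugacy, computed with \cite{magma}). For abelian $E$ the value is simply $|E|$, giving the entries $(1,1)$, $(C_3,3)$, $(C_5,5)$, $(C_7,7)$, $(C_9,9)$, $(C_3 \times C_3, 9)$, $(C_{15},15)$, and similarly for the higher cyclic and abelian cases. For the nonabelian groups---$F_{21} = C_7 \rtimes C_3$, the two extraspecial groups $3^{1+2}_\pm$ of order $27$, the two nontrivial semidirect products $(C_7 \times C_7) \rtimes_j C_3$, $C_7 \rtimes C_9$, $C_3 \wr C_3$, $C_{31} \rtimes C_5$, $C_{63} \rtimes C_3$, and their direct products with cyclic factors---$k(E)$ is read off from the character table, either by hand or via GAP/Magma; for direct products $k(E_1 \times E_2) = k(E_1) k(E_2)$.

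The argument itself is short; the real work is bookkeeping. The main obstacle is not conceptual but combinatorial: one has to confirm that the enumeration of inertial quotients in $\GL_6(2)$ is complete and that the distinct isomorphism-types-with-action (such as the three actions of $C_3$ on $D$, the various actions of $C_{15}$ and $F_{21}$ on $D$, and so on) are all correctly matched with the corresponding $(E, l(B))$ pair. Since the map $E \mapsto k(E)$ depends only on the isomorphism class of $E$, the list is much shorter than the enumeration of distinct inertial quotients, and this step is a routine verification.
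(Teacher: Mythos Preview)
Your proposal is correct and follows essentially the same approach as the paper: Fong--Harris reduces to the Brauer correspondent, and one then identifies the principal block of $N_G(D)$ with $\cO(D\rtimes E)$ (the paper does this by citing K\"ulshammer's theorem and Usami's observation that the twisting cocycle vanishes for principal blocks, whereas you give the equivalent direct argument via Schur--Zassenhaus and the fact that $O_{2'}(N_G(D))$ is in the kernel of the principal block), and finally computes $k(E)$ case by case.
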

\begin{proof}
From the main theorem of \cite{fongharris}, $B$ is perfectly isometric to its Brauer correspondent $b$, the principal block of $\cO N_G(D)$. Note that, by definition, $B$ and $b$ have the same inertial quotient. From the main theorem of \cite{kul85} and \cite[6.14.1]{lin18}, $b$ is source algebra equivalent to a twisted group algebra $\cO_\alpha (D \rtimes E)$. From the discussion in \cite[2.5]{usami97}, since $b$ is the principal block then $\alpha=1$. In particular $l(B) = l(b) = l(\cO (D \rtimes E)) = k(E)$. The result follows by computing the number of irreducible ordinary characters for each possible isomorphism class of inertial quotients.
\end{proof}

Due to a theorem of Walter, we know precisely what nonabelian finite simple groups possess an abelian Sylow $2$-subgroup, and hence a principal block with a defect group we are interested into. Note that, perhaps unsurprisingly, every abelian defect $2$-group that occurs in finite simple groups is elementary abelian.

\begin{proposition}[\cite{walter69}] \label{walter}
Let $G$ be a nonabelian finite simple group with an abelian Sylow $2$-subgroup $P$. Then one of the following occurs:
\begin{enumerate}[label=(\roman*)]\itemsep0em
\item $G = \PSL_2(q)$ with $q \equiv 3, 5 \pmod 8$, $P = (C_2)^2$ and $N_G(P)\cong A_4$.
\item $G = \PSL_2(2^n)$ for $n \in \mathbb{N}$, $P= (C_2)^n$ and $N_G(P) \cong (C_2)^n \rtimes C_{2^n-1}$.
\item $G = {}^2G_2(3^{2n+1})$ for $n \in \mathbb{N}$, $P=(C_2)^3$ and $N_G(P) \cong C_2 \times A_4$.
\item $G= J_1$, $P=(C_2)^3$ and $N_G(P)=(C_2)^3 \rtimes (C_7 \rtimes C_3)$.
\end{enumerate}
\end{proposition}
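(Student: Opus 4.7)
The plan is to defer to Walter's original classification, which is a deep theorem in its own right and a foundational piece of the revision of the classification of finite simple groups with restricted 2-local structure; I would not attempt to reprove it here, but would sketch the overall strategy. First, since $G$ is a nonabelian finite simple group, the Feit--Thompson odd order theorem forces $G$ to have even order, so $P \neq 1$. The abelian Sylow condition places very strong restrictions on fusion of involutions: by Burnside's fusion theorem, two elements of $P$ that are $G$-conjugate are already $N_G(P)$-conjugate, and hence $G$-fusion in $P$ is controlled by the inertial quotient $N_G(P)/C_G(P)$. Combined with Brauer's theory of principal blocks with abelian defect, and in particular the sort of perfect isometry statements used elsewhere in this paper (Fong--Harris), this already pins down substantial character-theoretic data about $G$.

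Second, one analyses the centraliser $C=C_G(t)$ of a central involution $t \in P$. Because $P$ is abelian, $P \leq C$, so $C$ contains a full Sylow $2$-subgroup of $G$. Using the fact that $G$ is simple together with Glauberman's $Z^*$-theorem, $t$ cannot be isolated, so one obtains multiple classes of involutions controlled by $N_G(P)/C_G(P)$; combining with signalizer functor / Bender-style local analysis, one classifies the possible isomorphism types of $C$. Each candidate type of involution centraliser then corresponds to a family on the list: the split $\PSL_2(q)$ for $q \equiv 3,5 \pmod 8$ (from the Klein four case, via the Brauer--Suzuki--Wall analysis of groups with dihedral/Klein four Sylow), the Suzuki--Ree characterisations for $\PSL_2(2^n)$ and ${}^2G_2(3^{2n+1})$ (both having elementary abelian Sylow via a Singer cycle action in the normaliser), and the Janko group $J_1$, which was identified precisely by the exotic centraliser structure $C_2 \times A_5$ arising from this analysis.

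Finally, once each simple group has been identified, the description of $N_G(P)$ in the statement is a direct computation: for $\PSL_2(q)$ with $q$ odd the Sylow is a Klein four group with $N_G(P) \cong A_4$; for $\PSL_2(2^n)$ the Borel subgroup is $P \rtimes C_{2^n-1}$ and $N_G(P)$ coincides with it; for ${}^2G_2(3^{2n+1})$ the normaliser structure $C_2 \times A_4$ is standard; and for $J_1$ the description $(C_2)^3 \rtimes (C_7 \rtimes C_3)$ is explicit in Janko's original construction. The genuine obstacle, and the entire content of Walter's theorem, is the completeness of the list: showing that no other simple group can arise requires the full force of the local and character-theoretic machinery of \cite{walter69}, which is why we simply cite that paper.
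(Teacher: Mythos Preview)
Your proposal is correct and in fact goes further than the paper: the paper gives no proof at all for this proposition, simply stating it with the attribution \cite{walter69} and using it as a black box. Your decision to defer to Walter's original classification is exactly what the paper does, and the additional sketch of the strategy (Feit--Thompson, Burnside fusion, $Z^*$-theorem, centraliser analysis, identification of $N_G(P)$) is a helpful gloss that the paper does not provide.
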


Blocks of quasisimple groups with an abelian defect group when $p=2$ have been classified in \cite[6.1]{ekks}. However, our methods are not currently able to deal with nonprincipal blocks in full generality, which is why we use Walter's theorem instead.


\section{Crossed products and block chains}
We recall the key concepts from \cite{kul95}. Given a finite group $G$ and a ring with identity $A$, $A$ is a $G$-graded ring if there is a decomposition $A=\bigoplus_{g \in G} A_g$ as additive subgroups such that $A_g A_h \subseteq A_{gh}$, and $A_1=R$ is a subring of $A$ containing $1$.  A $G$-graded ring $A$ is called a crossed product of $A_1$ with $G$ if for any $g \in G$, $A_g$ contains at least one unit. We call two $G$-graded rings $A$ and $B$ weakly equivalent if there is an isomorphism of rings $\phi: A \to B$ such that $\phi(A_g) \subseteq B_g$ for all $g \in G$. Moreover, we say they are equivalent if $\phi$ restricts to the identity map on $A_1 \cong B_1$. One of the main results of \cite{kul95} is to give a parametrization of all possible crossed products between a ring $R$ and a group $G$.

\begin{theorem} \label{crossedprodeqclass6}
The equivalence classes of crossed products of a ring $R$ with a group $G$ are parametrized by pairs $(\omega, \zeta)$, where $\omega: G \to \Out(R)$ is a homomorphism whose corresponding $3$-cocycle in $H^3(G, \mathcal{U}(Z(R)))$ is zero, and $\zeta \in H^2 (G, \mathcal{U}(Z(R)))$ where the action of $G$ on $\mathcal{U}(Z(R))$ is induced by $\omega$. Moreover, weak equivalence classes of crossed products correspond to orbits of $\Aut(R)$ on the set of possible $(\omega, \zeta)$.
\end{theorem}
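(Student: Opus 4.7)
The plan is to mimic the standard Eilenberg--MacLane obstruction-theory argument for non-abelian extensions, translated to the setting where the ``extension data'' is recorded by a crossed product rather than a group.

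First I would extract the cocycle data from a given crossed product $A = \bigoplus_{g \in G} A_g$ with $A_1 = R$. Choose a unit $u_g \in A_g$ for each $g$; then conjugation gives $\sigma_g \in \Aut(R)$, $\sigma_g(r) = u_g r u_g^{-1}$, and the identities $u_g u_h = c(g,h)\, u_{gh}$ define $c(g,h) \in \mathcal{U}(R)$. Associativity $(u_g u_h) u_k = u_g (u_h u_k)$ unpacks to the two standard identities
\[
\sigma_g \sigma_h = \Inn(c(g,h)) \sigma_{gh}, \qquad c(g,h)\,c(gh,k) = \sigma_g(c(h,k))\,c(g,hk).
\]
The first shows that $\omega : G \to \Out(R)$, $g \mapsto [\sigma_g]$, is a well-defined group homomorphism, and changing units $u_g \rightsquigarrow v_g u_g$ with $v_g \in \mathcal{U}(R)$ leaves $\omega$ unchanged. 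Since an equivalence $\phi$ restricts to $\mathrm{id}_R$, it merely implements such a change of units, so $\omega$ is an equivalence invariant.

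Second, starting only from $\omega$, I would lift it to a set-theoretic map $\sigma : G \to \Aut(R)$ and show that $\sigma_g \sigma_h \sigma_{gh}^{-1}$ lies in $\Inn(R)$, hence equals $\Inn(c(g,h))$ for some $c(g,h) \in \mathcal{U}(R)$ which is determined up to an element of $\mathcal{U}(Z(R))$. Computing the failure of the second associativity identity then yields a $3$-cochain $\alpha(g,h,k) \in \mathcal{U}(Z(R))$. A standard calculation shows $\alpha$ is a $3$-cocycle with respect to the $G$-action on $\mathcal{U}(Z(R))$ induced by $\omega$, and that its class in $H^3(G,\mathcal{U}(Z(R)))$ is independent of both the lift $\sigma$ and the representatives $c(g,h)$. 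Vanishing of $[\alpha]$ is exactly what is needed to recursively adjust the $c(g,h)$ so that the second identity holds on the nose, which is equivalent to building an honest crossed product realising $\omega$.

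Third, once $[\alpha]=0$ and a ``base'' crossed product $A_\omega$ exists, any other crossed product $A'$ realising the same $\omega$ has multiplication cocycle $c' = \zeta \cdot c$ for a uniquely determined $\zeta : G \times G \to \mathcal{U}(Z(R))$; the associativity identity for $c'$ together with that for $c$ forces $\zeta$ to be a $2$-cocycle, and changing the unit normalisation in $A'$ modifies $\zeta$ by the coboundary of a function $G \to \mathcal{U}(Z(R))$. This gives a bijection between equivalence classes of crossed products with prescribed $\omega$ and $H^2(G,\mathcal{U}(Z(R)))$, yielding the first assertion. For the weak-equivalence statement, one relaxes $\phi|_R = \mathrm{id}_R$ to $\phi|_R \in \Aut(R)$; such a $\phi$ transports $(\omega,\zeta)$ by the natural $\Aut(R)$-action on $\Hom(G,\Out(R))$ and the induced action on $H^2(G,\mathcal{U}(Z(R)))$, and every element of $\Aut(R)$ arises this way, so weak equivalence classes correspond to $\Aut(R)$-orbits.

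The main obstacle is the verification at the heart of Step~2: that the $3$-cocycle $\alpha$ is well-defined modulo coboundaries, i.e.\ that different choices of lift $\sigma$ and of $c(g,h)$ change $\alpha$ only by a coboundary, and that $[\alpha]=0$ is both necessary and sufficient for the existence of a crossed product. This is the usual non-abelian cohomological bookkeeping for the sequence $1 \to \Inn(R) \to \Aut(R) \to \Out(R) \to 1$, and while the individual manipulations are mechanical, arranging them so that the centre-valued obstruction is canonical is the only genuinely delicate part of the argument; everything else is an elementary manipulation of multiplicative cocycles.
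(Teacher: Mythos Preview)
The paper does not actually prove this theorem: it is stated without proof as ``one of the main results of \cite{kul95}'' (K\"ulshammer, \emph{Donovan's conjecture, crossed products and algebraic group actions}), and is used as a black box in the subsequent Method~\ref{clubsuit6}. So there is no proof in the paper to compare against.

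That said, your sketch is the standard Eilenberg--MacLane obstruction argument and is essentially the approach taken in K\"ulshammer's paper. The three steps you outline --- extracting $\omega$ from a choice of graded units, building the $H^3$ obstruction from a set-theoretic lift of $\omega$ and showing its class is independent of choices, and then parametrizing the equivalence classes over a fixed admissible $\omega$ by $H^2(G,\mathcal{U}(Z(R)))$ --- are exactly right, as is the observation that passing from equivalence to weak equivalence amounts to allowing $\phi|_R \in \Aut(R)$ rather than $\phi|_R = \mathrm{id}_R$, which produces the $\Aut(R)$-orbit description. The only point worth flagging is a small imprecision in your Step~3: the bijection with $H^2$ is really a torsor structure (there is no canonical base point $A_\omega$), so strictly speaking the pair $(\omega,\zeta)$ parametrizes equivalence classes only after one fixes a reference crossed product for each admissible $\omega$; this is harmless for the statement as phrased, but worth keeping in mind.
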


Given a finite group $G$ and normal subgroup $N$ with index a prime $\ell \neq p$, and given a block $B$ of $\cO G$ that covers a $G$-stable block $b$ of $\cO N$, then $B$ is a crossed product of $b$ with $G/N$. Therefore, we can use the theorem above to determine all possibilities for $B$ for each given $b$. In most cases we will encounter, however, we only know $b$ up to Morita equivalence, and we need a way to extend this information to the possibilities for $B$. Let $f$ be a basic idempotent of $b$, i.e. an idempotent such that $fbf$ is a basic algebra of $b$. Then, as shown in \cite[4.2]{ar19}, $fBf$ is a crossed product of $fbf$ with $X$ and $fBf$ is Morita equivalent to $B$. The method we use is precisely the one developed in \cite[4.5]{ar19}. We give a quick summary and establish notation for the benefit of the reader, and refer to \cite{ar19} for the technical details.

\begin{method} \label{clubsuit6}
\normalfont Let $G$ be a finite group and $B$ be the principal block of $\cO G$ with defect group $D \cong (C_2)^6$. Let $N \lhd G$ with $[G:N]$ odd (so $G/N$ is solvable) and let $B$ cover a $G$-stable block $b$ of $\cO N$. Moreover, suppose that $G/N$ is supersolvable, that $C_G(N) \leq N$, and that $N=\ker(G \to \Out(b))$ where the map is given by the action induced by conjugation of $G$ on $b$. Since $[G:N]$ is odd, $B$ and $b$ share a defect group. 

For a basic idempotent $f$ for $b$, by \cite[4.2]{ar19} we know that $B$ is Morita equivalent to $fBf$, which is a crossed product of $fbf$ with $G/N$. Then we have the following injective homomorphisms:
\begin{equation*}
G/N \xrightarrow{\quad \alpha\quad} \Out_\star(N) \xrightarrow{\quad\beta\quad} \Out(b) \xrightarrow{\quad\gamma\quad} \Pic(b)=\Pic(fbf)=\Out(fbf)
\end{equation*}
where $\Out_\star(N) = \{ \Phi \in \Out(N) : \forall \phi \in \Phi , \; \phi(b)=b \}$. 

For $\phi \in \Aut(b)$, we define the $b$-$b$-bimodule $\;_\phi b$ as: ${}_\phi b = b$ as sets, and $x.m.y=\phi(x)my$ for $x,m,y \in b$. Then for any $g \in G$ the induced action $\tau_g \in \Aut(b)$ given by conjugation has trivial source, which means that ${}_{\tau_g} b \in \cT(b)$, so the image of $G/N$ under $\gamma \beta \alpha$ is contained in $\cT(b)$. Note, however, that $\cT(b)$ is not invariant under Morita equivalences. 

Since $G/N$ is supersolvable, we consider a chain of normal subgroups with prime indices,
\begin{equation*} 
N = N_0 \lhd N_1 \lhd \dots \lhd N_t = G 
\end{equation*}
and a corresponding chain of blocks $b_i$ of $\cO N_i$ such that $b_i$ covers $b_{i-1}$, with $b_t:=B$ and $b_0:=b$. We can determine all possibilities for $b_1$ by considering all crossed product weak equivalence classes specified by possible maps $\omega_1$ given by the action of $G/N$ on $b$. In fact, we can assume that $\zeta =1$ (with the notation of Theorem \ref{crossedprodeqclass6}) because $N_1/N$ is cyclic (see \cite[4.]{ea17}). 

For each distinct possibility for $b_1$, we can then consider $N_2$ and $b_2$ as a crossed product of $b_1$ with $N_2/N_1$. Possibilities for $\omega_2$ are controlled by the image of $G/N_1$ in a quotient of $\Pic(b)$, but this is not enough: in fact, while we know the possibilities for the isomorphism class of $N_2/N_1$, we do not automatically know its embedding in $\Pic(b_1)$. Hence, it is necessary to also compute $\Pic(b_1)$ to proceed. This is not merely a technical issue, but a concrete obstruction, and the reason why a cocycle $\zeta$ is needed other than $\omega$ in Theorem \ref{crossedprodeqclass6} to determine the equivalence class of a crossed product. An exception is when $G/N$ is not cyclic but still has trivial second cohomology, such as when $G/N \leq C_7 \rtimes C_3$, in which case we can ``jump'' from $N$ to $G$ directly by considering all possibilities for $\omega$ without considering a block chain.

We iterate the process to determine all possible block chains and, at the end, all possible Morita equivalence classes for $B$. 

Since we are only looking at principal blocks in this paper, we can use Lemma \ref{sambalecalcs6} together with Lemma \ref{iqmagic6} to detect the inertial quotient of $b$ from knowledge of $l(b)$, and then to exclude some of the classes determined for $b_1, \dots, B$ based on Lemma \ref{iqmagic6}. Note that in general the inertial quotient is not known to be invariant under Morita equivalences, which is why we need to use Lemma \ref{sambalecalcs6}. Further, note that analysis of crossed products does not distinguish principal blocks, and some classes can only occur as nonprincipal blocks (any interested reader may compare Lemma 4.11 in \cite{mck19} about arbitrary blocks with Lemma \ref{iqmagic6}). 
\end{method}

Now we apply Method \ref{clubsuit6} to the cases we are interested in. We label Morita equivalence classes as in Theorem \ref{maintheorem6}.

\begin{proposition}\label{33}
Let $G$ be a finite group and $B$ be the principal block of $\cO G$ with defect group $D= (C_2)^6$. Suppose that $B$ is quasiprimitive and that there are $H_1, H_2 \lhd G$, $H=H_1 \times H_2$ with $H \lhd G$ and $[G:H]$ odd. Then $B$ covers the principal blocks $c_i$ of $\cO H_i$, so $B$ also covers the principal block $c\cong c_1 \otimes c_2$ of $\cO H$. Suppose that $C_G(H) \leq H$ and $H=\ker(G\to \Out(c))$. Suppose that $H_1, H_2 \in \{(C_2)^3, J_1, {}^2G_2(q), \SL_2(8)\}$ for some $q=3^{2m+1}$, $m \in \mathbb{N}$. Then the following holds.
\begin{enumerate}[label=(\arabic*)]\setlength\itemsep{0em}
\item If $H_1=\SL_2(8)$ and $H_2=\SL_2(8)$ then $B$ is Morita equivalent to $c$, or to (lxvii), (lxx), (lxxxi).
\item If $H_1=\SL_2(8)$ and $H_2=(C_2)^3$ then $B$ is Morita equivalent to $c$, or to (xxv), (xxx), (xxxii), (xlviii), (lvii), (lxiii), (lxvi), (lxix), (lxxviii).
\item If $H_1=\SL_2(8)$ and $H_2=J_1$ then $B$ is Morita equivalent to $c$, or (lxxx).
\item If $H_1=J_1$ and $H_2=(C_2)^3$ then $B$ is Morita equivalent to $c$, or (lv), (lxiv), (lxxvii).
\item If $H_1=J_1$ and $H_2=J_1$ then $B$ is Morita equivalent to $c$.
\item If $H_1 = {}^2G_2(q)$ and $H_2=\SL_2(8)$, then $B$ is Morita equivalent to $c$ or (lxxxi).
\item If $H_1 = {}^2G_2(q)$ and $H_2=(C_2)^3$, then $B$ is Morita equivalent to $c$ or (lvii), (lxvi), (lxxviii).
\item If $H_1 = {}^2G_2(q)$ and $H_2=J_1$, then $B$ is Morita equivalent to $c$.
\item If $H_1 = {}^2G_2(q)$ and $H_2={}^2G_2(q')$ for a possibly different $q'=3^{2m'+1}$, $m' \in \mathbb{N}$, then $B$ is Morita equivalent to $c$. 
\end{enumerate}
\end{proposition}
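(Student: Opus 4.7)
The plan is to apply Method~\ref{clubsuit6} to the pair $(H,G)$ with the covering $c \subseteq B$. First I verify its hypotheses: $[G:H]$ is odd, so $G/H$ is solvable by Feit--Thompson and embeds in $\Out(H_1 \times H_2)$. Odd order forbids the order-$2$ swap of tensor factors, so the image lies in $\Out(H_1) \times \Out(H_2)$; each $\Out(H_i)$ is a subgroup of $\GL_3(2)$, $C_3$, $1$, or $C_{2m+1}$ for $H_i \in \{(C_2)^3, \SL_2(8), J_1, {}^2G_2(q)\}$ respectively, all supersolvable. The remaining conditions $C_G(H) \leq H$ and $H = \ker(G \to \Out(c))$ are given.

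Next I identify the Morita class of $c = c_1 \otimes c_2$ using the classification of principal blocks with defect $(C_2)^3$ in \cite{ea14}: the principal blocks of $\cO J_1$ and $\cO \Aut(\SL_2(8))$ lie in a common Morita class with that of $\cO((C_2)^3 \rtimes (C_7 \rtimes C_3))$, and similarly those of $\cO {}^2G_2(q)$ lie in a single class determined by i.q.\ $C_3$ (agreeing with that of $\cO(A_4 \times C_2)$). In each of the nine cases $c$ is then Morita equivalent to the principal block of the product listed in the corresponding row of Theorem~\ref{maintheorem6}.

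Then I run the chain argument of Method~\ref{clubsuit6}: pick a composition chain $H = N_0 \lhd N_1 \lhd \dots \lhd N_t = G$ with prime index at each step, and apply Theorem~\ref{crossedprodeqclass6} to parametrize the crossed product $b_i$ of $b_{i-1}$ by $N_i/N_{i-1}$. Since each factor is cyclic of odd order one has $\zeta_i = 1$, so only the homomorphism $\omega_i$ matters, and its image sits in $\cT(b_{i-1}) \leq \Pic(b_{i-1})$ via the chain $\alpha, \beta, \gamma$ of Method~\ref{clubsuit6}. The Picard group of $c$ is governed by those of the two tensor factors, whose odd-order outer automorphism groups are listed above; enumerating odd-order subgroups $Q$ of $\Out(c_1) \times \Out(c_2)$ together with their inequivalent embeddings in $\Pic(c)$ then lists all candidate Morita classes for $B$.

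Finally I match each resulting crossed product with an entry of Theorem~\ref{maintheorem6}. By Lemma~\ref{iqmagic6} the inertial quotient $E_B$ contains the i.q.\ of $c$ as a normal subgroup, and by Lemma~\ref{sambalecalcs6} $l(B) = k(E_B)$ pins down the candidate entry when combined with the explicit action on $D$. The main obstacle is case (2), where $\Out(c_1) \times \Out(c_2)$ sits inside $C_3 \times \GL_3(2)$ and admits numerous odd-order subgroups with different actions on $D$, requiring the subscript conventions $(F_{21})_j$, $(C_{21})_j$ introduced before Lemma~\ref{sambalecalcs6} to separate them; similar but smaller enumerations handle cases (1), (4), (7). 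Cases (5), (8), (9) reduce quickly, since $\Out(J_1) = 1$ and the odd part of $\Out({}^2G_2(q))$ acts trivially on the Sylow $2$-normaliser, so the image of $G/H$ in $\Pic(c)$ is trivial and $B \sim c$.
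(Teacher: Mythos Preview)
Your overall strategy mirrors the paper's, and for cases (1)--(5) the outline is sound: since $H_1,H_2$ are specific groups with small odd-part outer automorphism groups, enumerating overgroups $H\lhd G\leq\Aut(H)$ of odd index and reading off their principal blocks suffices. Two genuine problems remain, however.

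First, a factual error: the principal blocks of $\cO J_1$, $\cO\Aut(\SL_2(8))$ and $\cO((C_2)^3\rtimes(C_7\rtimes C_3))$ are \emph{not} Morita equivalent --- they are three distinct classes in the $(C_2)^3$ classification of \cite{ea14}, which is exactly why (xxviii), (xxix), (xxx) appear separately in Theorem~\ref{maintheorem6}. Likewise $B_0(\cO\,{}^2G_2(q))$ is not Morita equivalent to $\cO(A_4\times C_2)$. In this proposition the statement refers to $c$ itself rather than a row number, so the damage is mostly to your labelling of the ``or'' clauses, but the misidentification should be corrected.

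Second, and this is the real gap, your treatment of the ${}^2G_2(q)$ factor in cases (6)--(9) is incomplete. You assert that the field automorphism acts trivially on the Sylow $2$-normaliser and conclude that its image in $\Pic(c_1)$ vanishes. Even granting the first assertion, the implication is unjustified: an automorphism of $H_1$ can centralise $N_{H_1}(P)$ while still permuting ordinary characters of $c_1$, hence act nontrivially on the block. Since $\Out({}^2G_2(q))\cong C_{2m+1}$ is odd and unbounded in $m$, you cannot simply enumerate overgroups in cases (6) and (7) either. The paper closes this gap differently: by Ward \cite{g2inner} each character degree in $B_0({}^2G_2(q))$ occurs with multiplicity at most $2$, so any automorphism of $c_1$ has order dividing $2$ in $\Out(c_1)$; the odd-order group $G/H$ therefore has trivial image on the $c_1$ factor, and the analysis reduces to $\Out(H_2)$ alone. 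You need this (or an equivalent) argument for (6)--(9) to go through.
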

\begin{proof}
In cases 1-5 we are dealing with specific groups instead of Morita equivalence classes, so it suffices to examine their outer automorphism groups. Recall $\Out(\SL_2(8))= C_3$, $\Out(J_1)=1$ and $\Out((C_2)^3) = \GL_3(2)$ with maximal subgroup of odd order is $C_7 \rtimes C_3$. 

In the notation of Method \ref{clubsuit6}, we write $T$ for the maximal possible subgroup of odd order in the image of $G/H$ under $\alpha$. 
\begin{enumerate}[label=(\arabic*)]
\item If $H_1=H_2= \SL_2(8)$ then $T=C_3 \times C_3$, so $G \leq \Aut(\SL_2(8)) \times \Aut(\SL_2(8))$. In particular, $B$ is among (xlix), (lxvii), (lxxxii) and (lxx).
\item If $H_1=\SL_2(8)$ and $H_2=(C_2)^3$ then $T=C_3 \times (C_7 \rtimes C_3)$, so in particular $B$ is among (viii), (xxv), (xxx), (xxxii), (xlviii), (lvii), (lxiii), (lxvi), (lxix), and (lxxviii).
\item If $H_1=\SL_2(8)$ and $H_2=J_1$ then $T=C_3$, so $B$ is either in (lxv) or in (lxxx).
\item If $H_1=J_1$ and $H_2=(C_2)^3$ then $T=C_7 \rtimes C_3$, so $B$ is among (xxix), (lv), (lxiv) and (lxxvii).
\item If $H_1=J_1$ and $H_2=J_1$ then $T=1$, so $G=H$ and hence $B$ is in (lxxix).
\end{enumerate}
In cases 6-9 we are dealing with an infinite family of groups, so while for each possibility the group $T$ is known, there could be an arbitrary number of Morita equivalence classes for $B$. From \cite{g2inner} every degree of an irreducible character of the principal block of $H_1$ occurs with multiplicity $1$ or $2$, which implies that every automorphism of the principal block of $H_1$ is inner. Hence since $H=\ker(G \to \Out(c))$ in our situation $\beta(\alpha(\Out_G(H_1))) = 1$ (that is, $G$ acts trivially on $H_1$). So in cases 6-8 we can limit our analysis to the subgroup $\Out(H_2)$, and, in case 9, $G=H$. 

The block $B$ of $G$ is a crossed product of $c$ with $G/H$, and from \cite{ea14} the principal blocks of ${}^2 G_2(q)$ with $q$ as specified above are in the same Morita equivalence class. Hence the possible Morita equivalence classes for $B$ can be determined simply by applying Method \ref{clubsuit6}. We take a chain of normal subgroups $\{N_i\}$ of length $t$, where $N_0:=H$ and $N_t:=G$, and consider the corresponding block chain $\{b_t\}$.
\begin{enumerate}[label=(\arabic*)]
\setcounter{enumi}{5}
\item  If $H_1 = {}^2G_2(q)$ and $H_2=\SL_2(8)$ then $T=C_3$, so the only nontrivial possibility for the Morita equivalence class of $B$ is (lxxxi), realised by the principal block of ${}^2G_2(q) \times \Aut(\SL_2(8))$. There is only one such Morita equivalence class, independent of the choice of $q$ \cite{ea14}.
\item  If $H_1 = {}^2G_2(q)$ and $H_2=(C_2)^3$ then $T=C_7 \rtimes C_3$, so the only nontrivial possibilities for the Morita equivalence class of $B$ are (lvii), (lxvi), (lxxviii), in each case realised by the principal blocks of ${}^2G_2(q) \times ((C_2)^3 \rtimes F)$ for the appropriate choice of $F$.
\item  If $H_1 = {}^2G_2(q)$ and $H_2=J_1$ then $T=1$, so $G=H$. Note that all the possibilities for $c$ are Morita equivalent, since $c = c_1 \otimes \cO J_1$ and all the possibilities for $c_1$ are Morita equivalent, still from \cite{ea14}.
\item  If $H_1 = {}^2G_2(q)$ and $H_2={}^2 G_2(q')$ then $G=H$. Then all the possibilities for $c$ are Morita equivalent, since $c = c_1 \otimes c_2$ and all the possibilities for $c_1$ and $c_2$ are Morita equivalent. \qedhere
\end{enumerate}
\end{proof}

\begin{proposition}\label{321}
Let $G$ be a finite group and $B$ be the principal block of $\cO G$ with defect group $D= (C_2)^6$. Suppose that $B$ is quasiprimitive and that there are $H_1, H_2 \lhd G$, $H=H_1 \times H_2$ with $H \lhd G$ and $[G:H]$ odd. Then $B$ covers the principal blocks $c_i$ of $\cO H_i$, so $B$ also covers the principal block $c\cong c_1 \otimes c_2$ of $\cO H$. Suppose that $C_G(H) \leq H$ and $H=\ker(G\to \Out(c))$. Suppose that $H_1 \in \{J_1, {}^2G_2(q), \SL_2(8)\}$ for some $q=3^{2m+1}$, $m \in \mathbb{N}$, and that $c_2$ is Morita equivalent to $\cO (A_4 \times C_2)$ or to $B_0(\cO (A_5 \times C_2))$. Then the following holds:
\begin{enumerate}[label=(\arabic*)]\setlength\itemsep{0em}
\item If $H_1=\SL_2(8)$ and $c_2$ is Morita equivalent to $\cO(A_4 \times C_2)$ then $B$ is Morita equivalent to $c$, or (lvii), (xxxii).
\item If $H_1=\SL_2(8)$ and $c_2$ is Morita equivalent to $B_0(\cO(A_5 \times C_2))$ then $B$ is Morita equivalent to $c$, or (lviii).
\item If $H_1=J_1$ and $c_2$ is Morita equivalent to $\cO(A_4 \times C_2)$ then $B$ is Morita equivalent to $c$.
\item If $H_1=J_1$ and $c_2$ is Morita equivalent to $B_0(\cO(A_5 \times C_2))$ then $B$ is Morita equivalent to $c$.
\item If $H_1 = {}^2G_2(q)$ and $c_2$ is Morita equivalent to $\cO(A_4 \times C_2)$ then $B$ is Morita equivalent to $c$.
\item If $H_1 = {}^2G_2(q)$ and $c_2$ is Morita equivalent to $B_0(\cO(A_5 \times C_2))$ then $B$ is Morita equivalent to $c$.
\end{enumerate}
\end{proposition}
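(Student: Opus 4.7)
The plan is to mirror the proof of Proposition \ref{33}, applying Method \ref{clubsuit6} to bound the odd-order part of the image of $G/H$ in $\Pic(c)$. Since $[G:H]$ is odd and $H_1, H_2$ are non-isomorphic $G$-normal direct factors of $H$, one has $\Out_\star(H) = \Out(H_1) \times \Out(H_2)$, and the classifying image in $\cT(c)$ splits factorwise as a subgroup of $\cT(c_1) \times \cT(c_2)$. I would then bound each factor separately and assemble the resulting possibilities for $B$.

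For the first factor, the analysis reproduces that of Proposition \ref{33}: the odd image in $\cT(c_1)$ is $C_3$ if $H_1 = \SL_2(8)$ (generated by the outer automorphism), and trivial if $H_1 \in \{J_1, {}^2G_2(q)\}$, using $\Out(J_1) = 1$ and the fact that every automorphism of the principal block of ${}^2G_2(q)$ is inner by the character-degree argument of \cite{g2inner}.

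For the second factor, $c_2$ is Morita equivalent to $\cO(A_4\times C_2)$ or $B_0(\cO(A_5\times C_2))$, both with defect $(C_2)^3$, inertial quotient $C_3$, and a common fusion system $\cF$ on the defect group in which $C_3$ acts faithfully on a rank-$2$ summand and trivially on a rank-$1$ summand. Computing fusion-preserving automorphisms of $(C_2)^3$ and quotienting by the inner $C_3$ yields $\Out(\cF) \cong S_3/C_3 \cong C_2$; invoking the main theorem of \cite{bkl17} then forces the odd part of $\cT(c_2)$ to be trivial.

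Assembling the bounds: in cases (3)--(6) the odd image is trivial, so $G = H$ and $B$ is Morita equivalent to $c$. In cases (1) and (2) the odd image is at most $C_3$, realised only via the outer automorphism of $\SL_2(8)$; the resulting nontrivial $C_3$-crossed products would be identified with the listed Morita classes in Theorem \ref{maintheorem6} by computing their inertial quotients via Lemma \ref{iqmagic6} and $l(B)$ via Lemma \ref{sambalecalcs6}. The hardest step will be the Morita-invariant computation of $\cT(c_2)$: since $H_2$ is specified only up to Morita equivalence, group-theoretic outer automorphisms of $H_2$ could in principle contribute odd-order elements, and the fusion-system bound is essential to rule these out before Method \ref{clubsuit6} can be applied without loss.
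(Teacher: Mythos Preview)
Your fusion-system bound on $\cT(c_2)$ is where the argument breaks. The exact sequence from \cite{bkl17} is
\[
1 \to \Out_D(ic_2i) \to \cT(c_2) \to \Out(D,\cF_{c_2}),
\]
and you have only controlled the rightmost term. For $c_2$ Morita equivalent to $\cO(A_4\times C_2)$ the kernel is not odd-trivial: the nontrivial linear characters of $A_4$ give $D$-fixing algebra automorphisms $g\mapsto\lambda(g)g$ that genuinely permute the central primitive idempotents, hence are outer, and they generate a $C_3$ inside $\Out_D(ic_2i)$. Concretely, $\Pic(\cO(A_4\times C_2))=S_3\times C_2$ by \cite{eali18}, so the odd part you need is $C_3$, not $1$. (There is also the separate issue that $\cT$ is only a source-algebra invariant while $c_2$ is specified up to Morita equivalence, which is why the paper works with $\Pic(c_2)$ rather than $\cT(c_2)$.)

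The consequence is that in cases (1), (3), (5) your bound on $T$ is too small: in case (1) one actually has $T=C_3\times C_3$, and in cases (3), (5) one has $T=C_3$. Extra crossed products therefore appear as candidates for $B$, and these cannot be eliminated by the structural bound alone. The paper's proof identifies each such candidate Morita class explicitly and then rules out the spurious ones using Lemma~\ref{iqmagic6} together with Lemma~\ref{sambalecalcs6}: the inertial quotient of $B$ must contain a prescribed normal subgroup, and the resulting constraint $l(B)=k(E)$ is violated by the unwanted candidates. Your outline never reaches this elimination step, so as written it does not establish (1), (3) or (5). Cases (2), (4), (6) are fine because $\Pic(B_0(\cO(A_5\times C_2)))=C_2\times C_2$ genuinely has trivial odd part, though your justification still needs to address the kernel.
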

\begin{proof}
From \cite{eali18}, $\Pic(\cO(A_4 \times C_2)) = S_3 \times C_2$ and $\Pic(B_0(\cO(A_5 \times C_2))) = C_2 \times C_2$. We use Method \ref{clubsuit6}. Moreover, we can apply the same argument as in Proposition \ref{33} to cases 5 and 6 to show that $\beta(\alpha(\Out_G(H_1))) = 1$ and hence we can limit our analysis to the subgroup $\Out(H_2) \leq \Pic(c_2)$. Let $T$ be the maximal subgroup of odd order of $\Pic(c)$ that we need to consider. We take a chain of normal subgroups $\{N_i\}$ of length $t$, where $N_0:=H$ and $N_t:=G$, and consider the corresponding block chain $\{b_t\}$.
\begin{enumerate}[label=(\arabic*)]
\item If $H_1=\SL_2(8)$ and $c_2$ is Morita equivalent to $\cO (A_4 \times C_2)$ then $T=C_3 \times C_3$, so $t \leq 2$. If $G=N$, we are done. If $G/N_0 \cong C_3$, the possibilities for the Morita equivalence class of $B$ are as follows:
	\begin{enumerate}[label=(\alph*)]
	\item (lvii), realised when $N_0=\SL_2(8) \times A_4 \times C_2$.
	\item (viii), realised when $N_0=\SL_2(8) \times \PSL_3(7)$.
	\item A nonprincipal block of $\cO ((\SL_2(8) \times (C_2)^2) \rtimes 3^{1+2}_+) \times C_2)$, realised when $N_0=\SL_2(8) \times \PSL_3(7) \times C_2$.
	\end{enumerate}
From Lemma \ref{iqmagic6} the inertial quotient of $B$ contains $C_{21}$ as a subgroup. Then by Lemma \ref{sambalecalcs6} case (b) cannot occur, since $l(B)=7$, but $k(E) \neq 7$ in all cases where $C_{21} \leq E$. Similarly, case (c) cannot occur, since $l(B)=7$ but $k(E) \neq 7$ whenever $C_{21} \leq E$.

If $G/N \cong C_3\times C_3$, then we consider the group $H' = \Aut(\SL_2(8)) \times H_2$, its principal block $c'$ and $N_0':=H'$. Note that $H' \lhd G$. From $\cite{eali18}$ $\Pic(B_0(\cO(\Aut(\SL_2(8))))) = C_3$, hence the subgroup of $\Pic(c')$ that we need to consider is $T'=C_3 \times C_3$. There are three possible embeddings of $G/N_0' \cong C_3$ in $T'$, which correspond to the following possibilities for the Morita equivalence class of $B$:
	\begin{itemize}
	\item (xxx), realised when $N_0'=\Aut(SL_2(8)) \times \PSL_3(7) \times C_2$.
	\item (xxv), which is realised when $N_0'=\Aut(\SL_2(8)) \times C_3 \times A_4 \times C_2$ (see \cite[4.13]{ar19}).
	\item (xxxii), realised when $N_0'=\Aut(\SL_2(8)) \times C_3 \times \PSL_3(7) \times C_2$ (see \cite[4.13]{ar19}).
	\end{itemize}
From Lemma \ref{sambalecalcs6} none of these possibilities can occur, as the inertial quotient of $B$ must contain $E= C_3 \times (C_7 \rtimes C_3)$ as a normal subgroup and so be equal to $E$, and $k(E)=15$, but in each of these possibilities the number of simple modules is different from $15$.
\item If $H_1=\SL_2(8)$ and $c_2$ is Morita equivalent to $B_0(\cO (A_5 \times C_2))$ then $T=C_3$, so $t \leq 1$. If $G=N$, we are done. Otherwise, by Method \ref{clubsuit6} the only possibility for the Morita equivalence class of $b_1$ is (lviii), realised when $N_0=\SL_2(8) \times A_5 \times C_2$.
\item If $H_1=J_1$ and $c_2$ is Morita equivalent to $\cO (A_4 \times C_2)$ then $T=C_3$, so $t \leq 1$. If $G=N_0$, we are done. Otherwise, by Method \ref{clubsuit6} the only possibility for the Morita equivalence class of $b_1$ would be (xxix). However from Proposition \ref{iqmagic6} the inertial quotient of $b_1$ is $(C_7 \rtimes C_3) \times C_3$ or a group that contains it, and in this case $l(b_1)=5$, but $k(E) \neq 5$ in every possible situation, so this case cannot occur.
\item If $H_1=J_1$ and $c_2$ is Morita equivalent to $B_0(\cO (A_5 \times C_2))$ then $T=1$, so $G=N_0$ and we are done.
\item If $H_1={}^2G_2(q)$ and $c_2$ is Morita equivalent to $\cO (A_4 \times C_2)$ then $T=C_3$, so $t \leq 1$. If $G=N_0$, we are done. Otherwise, by Method \ref{clubsuit6} the only possibility for the Morita equivalence class of $b_1$ would be (xxix). However, with an identical argument as in case (4), this case cannot occur. Then $G=N_0$.
\item If $H_1={}^2G_2(q)$ and $c_2$ is Morita equivalent to $B_0(\cO (A_5 \times C_2))$ then $T=1$, so $G=N_0$ and we are done. \qedhere
\end{enumerate}
\end{proof}

\begin{proposition}\label{222}
Let $G$ be a finite group and $B$ be the principal block of $\cO G$ with defect group $D= (C_2)^6$. Suppose that $B$ is quasiprimitive, and that there are $H_1, H_2, H_3 \lhd G$, $H=H_1 \times H_2 \times H_3$ with $H \lhd G$ and $[G:H]$ odd. Then $B$ covers the principal blocks $c_i$ of $\cO H_i$, so $B$ also covers the principal block $c=c_1 \otimes c_2 \otimes c_3$ of $\cO H$. Suppose that $C_G(H) \leq H$ and $H=\ker(G\to \Out(c))$. Suppose that $H_1, H_2, H_3 \in \{(C_2)^2, A_5, \PSL_2(q)\}$ for some odd $q$. Then each block $c_i$ is source algebra equivalent to either $\cO (C_2)^2$, $\cO A_4$ or $B_0(\cO(A_5))$. For brevity, we call these respectively nilpotent, of type $A_4$ and of type $A_5$. Further, suppose that if $c_i$ is nilpotent then $H_i \cong (C_2)^2$. Then the following holds:
\begin{enumerate}[label=(\arabic*)]\setlength\itemsep{0em}
\item If $c_1$ is nilpotent, $c_2$ is of type $A_4$ and $c_3$ is of type $A_5$, then $B$ is Morita equivalent to $c$ or (xiii), (xvi) or (xxxvi).
\item If $c_1$ is nilpotent and $c_2$ and $c_3$ are of type $A_5$, then $B$ is Morita equivalent to $c$ or (xxxvii).
\item If $c_1, c_2$ are of type $A_4$ and $c_3$ is of type $A_5$, then $B$ is Morita equivalent to $c$.
\item If $c_1$ is of type $A_4$ and $c_2, c_3$ are of type $A_5$, then $B$ is Morita equivalent to $c$.
\item If $c_1, c_2, c_3$ are of type $A_5$, then $B$ is Morita equivalent to $c$.
\end{enumerate}
\end{proposition}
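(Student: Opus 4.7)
The plan is to apply Method~\ref{clubsuit6} to each of the five cases. The key data is the maximal odd-order subgroup $T\leq\Pic(c)$ accessible via $G/H$; owing to the hypotheses $C_G(H)\leq H$ and $H=\ker(G\to\Out(c))$, the image of $G/H$ in $\Pic(c)$ factors through $\Out(H_1)\times\Out(H_2)\times\Out(H_3)$. For $H_i$ realising type $A_4$ or $A_5$ (so $H_i\in\{A_5,\PSL_2(q)\}$ for some odd $q$) the odd part of $\Out(H_i)$ that acts non-trivially on $c_i$ is trivial; only the nilpotent factors $H_i\cong(C_2)^2$ contribute, with $\Out((C_2)^2)=\GL_2(2)=S_3$ providing a $C_3$. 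Hence $T$ has order $3^s$, where $s$ is the number of nilpotent factors of $c$.

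In cases (3), (4) and (5) one has $s=0$, hence $T=1$, $G=H$, and $B\sim c$, which realises the Morita classes (xxxvi), (xxxvii) and (xxxviii) respectively. In cases (1) and (2), $s=1$ and $T=C_3$, so $[G:H]\in\{1,3\}$. The sub-case $[G:H]=1$ gives $B\sim c$, i.e.\ Morita class (xiii) in case (1) and (xiv) in case (2). For $[G:H]=3$ I would apply Method~\ref{clubsuit6} to the one-step chain $H\lhd G$: $B$ is a crossed product of $c$ by $C_3$ with trivial cocycle (as $C_3$ is cyclic; cf.\ \cite[4.]{ea17}), and its Morita class is determined by the induced $C_3$-action on the defect group $D\cong(C_2)^2\times(C_2)^2\times(C_2)^2$.

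In case (1) I expect two inequivalent realisations: either the new $C_3$ centralises $H_2\times H_3$, giving the principal block of $A_4\times H_2\times H_3$ of Morita class (xxxvi) with inertial quotient $(C_3)^3$; or the new $C_3$ is identified diagonally with the internal $C_3$-action of $H_2$ on its defect subgroup, giving the principal block of $((C_2)^4\rtimes C_3)\times H_3$ of Morita class (xvi) with inertial quotient $(C_3\times C_3)_3$. In case (2) the diagonal option is unavailable, since both $H_2$ and $H_3$ are of type $A_5$ and do not admit an absorption of the new $C_3$ into an $A_5$-type group structure, so only the independent option survives, giving $B\sim$ (xxxvii)$=A_4\times A_5\times A_5$.

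The argument is completed by Lemmas~\ref{iqmagic6} and~\ref{sambalecalcs6}, which exclude any further Morita class by matching inertial quotients and the numbers of simple modules via $l(B)=k(E)$. The main obstacle will be the verification in case (1) that the diagonal realisation produces precisely Morita class (xvi) rather than another class with inertial quotient $(C_3\times C_3)_i$ for $i\in\{1,2\}$; this comes down to computing $|C_D(E)|$ for the resulting action on $D$ and follows the bookkeeping of \cite[4.13]{ar19} and the analogous step of Proposition~\ref{321}(1).
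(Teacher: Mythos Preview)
Your computation of $T$ is incorrect, and this breaks cases (1), (3) and (4). The constraint in Method~\ref{clubsuit6} is that $G/H$ embeds into $\Pic(c)\cong\Pic(c_1)\times\Pic(c_2)\times\Pic(c_3)$ via $\gamma\beta\alpha$, and the relevant bound is the odd part of this Picard group, not the odd part of $\prod_i\Out(H_i)$. For a block of type $A_4$ one has $\Pic(\cO A_4)\cong S_3$ (see \cite{eali18}), whose odd part is $C_3$; thus every type-$A_4$ factor contributes a $C_3$ to $T$, not just the nilpotent ones. Your assertion that ``the odd part of $\Out(H_i)$ that acts non-trivially on $c_i$ is trivial'' for $H_i=\PSL_2(q)$ is neither proved nor invoked in the paper, and in any event it is the wrong object: the crossed-product parameter $\omega$ lives in $\Out(fbf)=\Pic(c)$, and the $C_3\leq\Pic(\cO A_4)$ arises from tensoring by linear characters, not from an outer automorphism of any particular $H_i$. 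You are in fact inconsistent on this point: in case (1) you claim $T=C_3$, yet your ``diagonal'' option producing (xvi) is precisely an embedding of $C_3$ using the $\Pic(c_2)$-factor, which only makes sense if $T\geq C_3\times C_3$.

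With the correct values $T=C_3\times C_3$ in cases (1) and (3) and $T=C_3$ in case (4), one cannot conclude $G=H$ directly in (3) and (4). The paper instead enumerates the crossed products: in case (3) the possible Morita classes of $b_1$ for $[N_1:H]=3$ are (xiii) and (xvi), both with $l(b_1)=9$; but by Lemma~\ref{iqmagic6} the inertial quotient of $b_1$ contains $(C_3)^3$ as a normal subgroup, so by Lemma~\ref{sambalecalcs6} one needs $l(b_1)\in\{17,27\}$, a contradiction. Case (4) is handled identically (the single nontrivial possibility (xiv) has $l=9$). In case (1) the paper must also treat $G/H\cong C_3\times C_3$, passing to $H'=(H_1\rtimes C_3)\times H_2\times H_3$ and again eliminating the extra classes via the $l(B)=k(E)$ criterion. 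These eliminations are the substance of the argument and are entirely missing from your proposal.
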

\begin{proof}
We use Method \ref{clubsuit6}, together with the Picard groups computed in \cite{eali18}. The image of $G/H$ under $\gamma\beta\alpha$ is contained in $\Pic(c_1) \times \Pic(c_2) \times \Pic(c_3)$, and we denote its maximal subgroup of odd order by $T$. We take a chain of normal subgroups $\{N_i\}$ of length $t$, where $N_0:=H$ and $N_t:=G$, and consider the corresponding block chain $\{b_t\}$.
\begin{enumerate}[label=(\arabic*)]
\item In this case, $G/H \leq C_3 \times \Pic(c_2) \times \Pic(c_3)$ as $H_1 \cong (C_2)^2$. Then $T=C_3 \times C_3$. From Lemma \ref{iqmagic6} the inertial quotient of $B$ must contain $C_3 \times C_3$ as a normal subgroup, so in particular looking at every possibility listed in Lemma \ref{sambalecalcs6} we have that $l(B) \geq 9$.
If $G=N_0$, we are done. Otherwise, if $G/N_0 \cong C_3$ we have three possibilities for the Morita equivalence class of $b_1$: 
\begin{enumerate}[label=(\alph*)]
\item (xxxv), realised when $N_0=(C_2)^2 \times A_4 \times A_5$.
\item (iii), realised when $N_0=(C_2)^2 \times \PSL_3(7) \times A_5$.
\item A nonprincipal block of $B_0(\cO(((C_2)^4 \rtimes 3^{1+2}) \times A_5))$, realised when $N_0=(C_2)^2 \times \PSL_3(7) \times A_5$.
\end{enumerate}
Note that in cases (b) and (c) $l(b_1)=3$, so these cases cannot occur as principal blocks. 

If $G/N \cong C_3 \times C_3$, then we consider $H_1'=(H_1 \rtimes C_3)$ which now has a block $c_1'$ of type $A_4$. Let $H' = H_1' \times H_2 \times H_3$. Note that $H' \lhd G$. Let $N_0'=H'$: then $G/N_0' \leq C_3$ and $T' \cong C_3 \times C_3  \leq \Pic(c_1') \times \Pic(c_2) \times \Pic(c_3)$, so there are three possible embeddings which determine the following Morita equivalence classes for $B$:
\begin{itemize}
\item (xiii), realised when $N_0'=A_4 \times \PSL_3(7) \times A_5$. 
\item (xvi), realised as a crossed product when $N_0'=\PSL_3(7) \times \PSL_3(7) \times A_5$.
\end{itemize}
Note that the first case occurs with two different embeddings.
\item In this case, $T=C_3$. There is a unique possibility for the Morita equivalence class of $B$, which is (xxxvii), realised when $N_0=(C_2)^2 \times (A_5)^2$.
\item In this case, $T=C_3 \times C_3$ so $t \leq 2$. From Lemma \ref{iqmagic6} the inertial quotient of $b_1$ must contain $(C_3)^3$ as a normal subgroup, so in particular looking at every possibility from Lemma \ref{sambalecalcs6} we have that $l(b_1) =27$ or $l(b_1)= 17$. 

The block $b_1$ is Morita equivalent to a crossed product of the basic algebra of $c$ with $X_1=N_1 /N_0$ as detailed before; let $\omega_1$ be the corresponding homomorphism. Then the possibilities for the Morita equivalence class of $b_1$ are as follows:
\begin{enumerate}[label=(\alph*)]
\item (xiii), realised when $N_0=\PSL_3(7) \times A_4 \times A_5$.
\item (xvi), realised when $N_0=\PSL_3(7)^2 \times A_5$.
\end{enumerate}
In both cases $l(b_1)=9$, a contradiction. Then $G=N_0$ and $B=c$.
\item In this case, $T=C_3$. As in the previous case the inertial quotient of $b_1$ must contain $(C_3)^3$ as a normal subgroup. There is a single nontrivial possibility for the Morita equivalence class of $B$: (xiv), realised when $N_0=\PSL_3(7) \times (A_5)^2$. However, in this case $l(B)=9$, a contradiction.
\item In this case, $T=1$ so $G=N_0$ and $B=c$ and we are done. \qedhere
\end{enumerate}
\end{proof}

\begin{proposition} \label{perm3}
Let $G$ be a finite group and $B$ be a quasiprimitive block of $\cO G$ with defect group $D= (C_2)^6$. Suppose that $H = H_1 \times H_2 \times H_3 \lhd G$, and that $G$ acting by conjugation permutes transitively the set $\{H_i\}_{i=1,2,3}$. Suppose that $[G:H]$ is odd, and let $c$ be the unique block of $\cO H$ covered by $B$. Suppose that $C_G(H) \leq H$, and that $H_1 \cong H_2 \cong H_3 \in \{A_5, \PSL_2(q)\}$ for some odd $q$. Then each block $c_i$ is source algebra equivalent to $\cO A_4$ or $B_0(\cO(A_5))$. For brevity, we call these respectively of type $A_4$ and of type $A_5$. Then
\begin{enumerate}[label=(\arabic*)]
\item If $c_1$ is of type $A_4$ then $B$ is Morita equivalent to $c$ or to $\cO (D \rtimes E)$ for a subgroup of odd order $E \leq \GL_6(2)$.
\item If $c_1$ is of type $A_5$ then $B$ is Morita equivalent to $c$ or to (lxi).
\end{enumerate}
\end{proposition}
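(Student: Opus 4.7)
The plan is to apply Method \ref{clubsuit6} after isolating the permutation action of $G/H$. Since $[G:H]$ is odd and $G$ acts transitively on $\{H_1,H_2,H_3\}$, the induced map $G/H \to S_3$ has transitive image of odd order, forcing the image to be $C_3$. I let $N \lhd G$ be its kernel, so $[G:N]=3$ and $N$ preserves each $H_i$; let $b$ be the block of $\cO N$ covered by $B$. Since $[G:H]$ is odd, $B$, $b$ and $c$ share the defect group $D$. From $c \cong c_1 \otimes c_2 \otimes c_3$ with all $c_i$ Morita equivalent to a common block, the image of $G/H$ under $\gamma\beta\alpha$ lies inside a natural copy of $\Pic(c_1) \wr S_3$ in $\Pic(c)$, and projects onto the $C_3 \leq S_3$ arising from the cyclic permutation of tensor factors.

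For case (2), it is known from \cite{eali18} that $\Pic(B_0(\cO A_5))$ has trivial odd part. Hence the image of $N/H$ in $\Pic(c_1)^3$ is trivial, and Method \ref{clubsuit6} yields that $b$ is Morita equivalent to $c$. If $G=N$ then $B$ is Morita equivalent to $c$. Otherwise $G/N \cong C_3$ and $B$ is a crossed product of $c$ by this $C_3$ acting via the cyclic permutation of factors; the cocycle from Theorem \ref{crossedprodeqclass6} vanishes because $G/N$ is cyclic. This single nontrivial class is realised by the principal block of $A_5 \wr C_3$, which is class (lxi).

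For case (1), from \cite{eali18} we have $\Pic(\cO A_4) = S_3$ with odd part $C_3$, so the image of $G/H$ sits in the odd-order subgroup $(C_3)^3 \rtimes C_3 \leq \Pic(c_1) \wr S_3$. The key observation is that $c$ is Morita equivalent to $\cO((C_2)^6 \rtimes (C_3)^3)$, a group algebra with normal defect group $D$. Walking up a chain of prime-index normal subgroups from $H$ to $G$, each odd-order element of $\Pic$ encountered lies in $\cT$ and hence lifts to a group-theoretic outer automorphism of the current realisation, preserving $D$ as a characteristic subgroup. Iterating Method \ref{clubsuit6}, either $B$ is Morita equivalent to $c$ (trivial extension) or $B$ is Morita equivalent to $\cO(D \rtimes E)$ for some odd-order $E \leq \GL_6(2)$.

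The main obstacle is case (1): controlling that each lift is a genuine group automorphism of the $D \rtimes (C_3)^3$-realisation rather than an exotic Picard element, and checking that the cocycles in Theorem \ref{crossedprodeqclass6} vanish along the chain. Both issues are handled by working with a prime-index chain (as in \cite[4.]{ea17}) and by using the triviality of sources of odd-order Picard elements of blocks with normal defect group from \cite{bkl17}.
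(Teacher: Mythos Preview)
Your argument for case~(2) is close to the paper's and is essentially correct. Both you and the paper set $N=\ker(G\to S_3)$, use that the odd part of $\Pic(B_0(\cO A_5))$ is trivial to force $N/H$ to act trivially, and then identify a single crossed product of $c$ with $C_3$. The paper justifies the last step by computing $\cT(c)=C_2\wr S_3$ via the exact sequence of \cite{bkl17}, whereas you factor through $\Out(H)=\Out(H_1)\wr S_3$; either route shows the relevant odd image is a single conjugacy class of $C_3$. Two points you should tighten: you implicitly assume $H=\ker(G\to\Out(c))$, and the paper handles the general case by replacing $H$ with $G[c]$ via Proposition~\ref{inneraut6}; and ``single nontrivial class'' needs the observation that all $C_3$ subgroups of $C_2\wr S_3$ are conjugate.

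Case~(1), however, has a genuine gap, and you are overcomplicating it. The paper's argument is one line: since each $c_i$ is source algebra equivalent to $\cO A_4$, the block $c$ is source algebra equivalent to $\cO(A_4)^3$, hence basic Morita equivalent to a block with normal defect group, hence inertial; then Lemma~\ref{nilpotentcovered6}(3) (odd-index extensions of inertial blocks are inertial) gives that $B$ is inertial, and the conclusion follows. Your chain approach instead requires, at each prime-index step, that every odd-order element of the Picard group of the intermediate block arises from a group automorphism of an explicit $D\rtimes E_i$ realisation. Neither \cite{bkl17} nor \cite{ea17} gives you this: the BKL sequence embeds $\cT(B)$ into $\Out(D,\mathcal{F})$, not into $\Out$ of any particular group realising $B$, and you have not computed the intermediate Picard groups to verify that their odd parts even lie in $\cT$. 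The inertiality route via \cite{zhou16} bypasses all of this.
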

\begin{proof}
Let $\sigma: G \to S_3$ be the homomorphism given by the action of $G$ by permutation on the set $\{H_i\}$. Then $N = \ker(\sigma)$ is a normal subgroup of $G$ of index $3$. Let $b_N$ be the unique block of $\cO N$ covered by $B$. 
\begin{enumerate}[label=(\arabic*)]
\item In this case, since each block $c_i$ is source algebra equivalent to $\cO A_4$, then the block $c$ is source algebra equivalent to $\cO (A_4)^3$. In particular, it is basic Morita equivalent to that block, and hence inertial. Then from Proposition \ref{nilpotentcovered6} $B$ is also inertial, and we are done.
\item We compute $\cT(B_0(\cO (A_5)^3))$. From the main theorem of \cite{bkl17}, there is an exact sequence
$$1 \to \Out_D(ici) \to \cT(c) \to \Out(D,\mathcal{F}_c)$$
where $i$ is a source algebra idempotent for $c$, and $\mathcal{F}_c$ is the fusion system of the block. Let $D_j$ be a defect group for $c_j$, the block of $H_j$ covered by $c$. Then $D=D_1 \times D_2 \times D_3$. First, note that the subgroup $\cT(c_1) \wr S_3 \leq \cT(c)$. From Lemma 3.1 in \cite{ea17}, $\Out_D(ici) = \prod_{j=1}^3 \Out_{D_j}(c_j)$ since each block $c_j$ is equal to its source algebra, as seen in Proposition \ref{classification6}. From \cite[1.5]{bkl17} we have that $\Out_{D_j}(c_j)=1$, so $\Out_D(ici)=1$. From Remark 1.2.f in \cite{bkl17}, $\Out(D, \mathcal{F}_c) \cong N_{\GL_6(2)}((C_3)^3)/(C_3)^3 \cong C_2 \wr S_3$. Then the last map in the sequence above is surjective and $\cT(c)=C_2 \wr S_3$. 

Suppose initially that $H=\ker(G\to \Out(c))$. In this case, following Method \ref{clubsuit6}, there is an embedding of $N/H$ in $\Pic(c_1) \times \Pic(c_2) \times \Pic(c_3) = (C_2)^3$. Then $N=H$ since $\Pic(c_i) \cong C_2$ for each $i$ and $G/H$ is odd. Since we have a source algebra equivalence we can limit our analysis to $\cT(c)$ and we do not need to consider the whole $\Pic(c)$. As computed above, $\cT(c)=C_2 \wr C_3$, so there is a unique possibility for the map $\omega: G/N \to \cT(c)$ that specifies this crossed product, which corresponds to $B$ being Morita equivalent to (lxi), realised in the example $(A_5)^3 \lhd (A_5 \wr C_3)$.

Now suppose that $G[c]=\ker(G \to \Out(c))$ strictly contains $H$. From Proposition \ref{inneraut6}, the unique block $\hat{c}$ of $G[c]$ covered by $B$ is source algebra equivalent to $c$. In particular $\cT(\hat{c}) = \cT(c)$. Then we can repeat the argument above to obtain the same result. \qedhere
\end{enumerate}
\end{proof}

\begin{remark} \normalfont Just as in \cite{ar19}, the assumption that $H=\ker(G \to \Out(c))$ in each of the propositions can be dropped, as in any relevant case we can reduce to such a situation without modifying the final deductions on $\gamma(\beta(\alpha(G/H)))$. It is, in fact, sufficient to note that by definition $\ker(G \to \Out(c)) = G[c]_\cO$ (as defined in Proposition \ref{inneraut6}), and that $H \leq G[c]_\cO$. Let $\hat{c}$ be the principal block of $G[c]$, which from Proposition \ref{inneraut6} is source algebra equivalent to $c$. Then we can replace $H$ and $c$ with $G[c]$ and $\hat{c}$ respectively in Propositions \ref{33}, \ref{321} and \ref{222} an obtain the same possible Morita equivalence classes for $B$ (see also Corollary 4.11 in \cite{ar19}).
\end{remark}


\section{Principal blocks with defect group $(C_2)^6$}

We classify the principal blocks with a normal defect group as a separate case.
\begin{theorem} \label{normalD6}
Let $B$ be a block of $\cO G$ for $G$ a finite group with a normal abelian Sylow $2$-subgroup $D \cong (C_2)^6$. Then $B$ is Morita equivalent to $\cO(D \rtimes E)$ where $E$ is a subgroup of $\GL_6(2)$ of odd order acting faithfully on $D$.
\end{theorem}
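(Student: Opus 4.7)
My plan is to combine Schur-Zassenhaus with Fong's reductions to bring every block into a canonical form, and then identify it directly as an untwisted group algebra.

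Since $D$ is the normal Sylow $2$-subgroup of $G$, by Schur-Zassenhaus there exists an odd-order complement $H$ with $G = D \rtimes H$. Set $K = C_H(D)$; as $K$ is characteristic in $H$ and centralised by $D$, it is a normal $p'$-subgroup of $G$, and $E_0 := H/K$ embeds faithfully into $\Aut(D) = \GL_6(2)$ as a subgroup of odd order. Moreover, as $D = O_2(G)$, every block of $\cO G$ has defect group exactly $D$.

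Let $B$ be a block of $\cO G$ covering a block $b$ of $\cO K$. By Fong's First Reduction (Theorem \ref{fong16}), $B$ is source-algebra equivalent to a block of the inertia group $N_G(b)$, which still has $D$ as a normal abelian Sylow $2$-subgroup, so by induction on $|G|$ I may assume $b$ is $G$-stable. As $K$ is a $p'$-group, $b$ has defect group $D \cap K = 1$ and is nilpotent. Applying Fong's Second Reduction (the theorem preceding Corollary \ref{kulshammerpuigcorollary6}) with $N = K$ produces a central extension $\tilde L$ of $L = G/K \cong D \rtimes E_0$ by a $p'$-group $Z$, together with a block $\tilde B$ of $\cO \tilde L$ Morita equivalent to $B$ with defect group $\tilde D \cong D$. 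A second application of Schur-Zassenhaus yields $\tilde L = \tilde D \rtimes \tilde H$, where $\tilde H$ is a central extension of $E_0$ by $Z$. When $B$ is principal, so is $\tilde B$ by the final clause of that reduction; since $Z$ is a central $p'$-subgroup of $\tilde L$, the principal block of $\cO \tilde L$ is naturally isomorphic to $\cO(\tilde L/Z) = \cO L = \cO(D \rtimes E_0)$, yielding the desired Morita equivalence with $E_0$ the inertial quotient of $B$.

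For a non-principal $B$, the block $\tilde B$ corresponds to a nontrivial character $\lambda \in \Irr(Z)$, and is isomorphic to the twisted group algebra $\cO_\alpha(D \rtimes E_0)$ where $\alpha = \lambda_{*}[\tilde H] \in H^2(E_0, \cO^\times)$ is the pushforward of the extension class. When $\alpha$ is trivial in cohomology the identification $\tilde B \cong \cO(D \rtimes E_0)$ is immediate. The main obstacle is the case $\alpha \neq 0$: one must exhibit a subgroup $E' \leq \GL_6(2)$ of odd order acting faithfully on $D$ with $\cO_\alpha(D \rtimes E_0)$ Morita equivalent to $\cO(D \rtimes E')$. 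This step requires a finer analysis that depends on the precise inertial quotient $E_0$ and the cocycle $\alpha$, exploiting the fact that the odd-order subgroups of $\GL_6(2)$ enumerated before Lemma \ref{sambalecalcs6} have only very restricted Schur multipliers and that the admissible twists $\alpha$ are constrained by the actual group-theoretic construction of $\tilde L$.
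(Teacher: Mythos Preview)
Your reduction via Schur--Zassenhaus and the two Fong reductions does establish the principal-block case, but by a longer route than the paper. The paper directly invokes K\"ulshammer's structure theorem \cite{kul85}: any block with normal defect group $D$ is already source algebra equivalent to $\cO_\alpha(D \rtimes E)$ with $E$ the inertial quotient, and then \cite[6.14]{lin18} together with \cite[2.5]{usami97} forces $\alpha = 1$ when $B$ is principal. Your Fong-reduction argument essentially rederives this special case by hand; it works, but the single citation to \cite{kul85} replaces all of it.

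The genuine gap is in the non-principal case. Despite the literal wording of the statement, the paper's proof explicitly assumes $B$ principal (note the clause ``since $B$ is the principal block $\alpha = 1$'' and the section context), and the conclusion is in fact \emph{false} for arbitrary blocks. Section~\ref{pnpsection} constructs, for instance, non-principal blocks of $(C_2)^6 \rtimes 7^{1+2}_+$ --- a group with normal Sylow $2$-subgroup $(C_2)^6$ --- having one simple module and $16$ irreducible characters; the only algebra $\cO(D \rtimes E')$ with $E'\le\GL_6(2)$ of odd order and one simple module is $\cO D$ itself, which has $64$. Hence no faithful $E'$ realises these blocks, and the ``finer analysis'' you sketch cannot be completed: the restricted Schur multipliers you invoke do not force every admissible twist to be Morita-trivial. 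The correct reading of the theorem is that it concerns principal blocks only, and your final paragraph should be replaced by that observation rather than by an attempt to untwist.
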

\begin{proof}From the main theorem of \cite{kul85}, in this situation $B$ is source algebra equivalent to a twisted group algebra $\cO_\alpha (D \rtimes E)$ where $E= N_G(D,e)/C_G(D)$ is its inertial quotient, so in particular $E \leq \GL_6(2)$ and $E$ has odd order. From \cite[6.14]{lin18} the cocycle $\alpha$ corresponds to the one defined in \cite[2.5]{usami97}, so since $B$ is the principal block $\alpha=1$. To get all possibilities for $E$, since the action on $D$ is faithful by definition, it is enough to consider all the conjugacy classes of odd order subgroups of $\Aut(D) = \GL_6(2)$.
\end{proof}

We listed all the possible inertial quotients in Section 3. To show that each inertial quotient corresponds to a distinct class of blocks, it is enough to compute each Cartan matrix, with two exceptions: the pairs $E=(C_7)_2$ and $(C_7)_3$ (see Section $4$), and $(C_7 \rtimes C_3)_3$ and $(C_7 \rtimes C_3)_4$. In these cases computing $\dim(J^2(Z(G)))$ with Magma \cite{magma} shows that the blocks are not Morita equivalent.

\begin{proof}[Proof of Theorem \ref{maintheorem6}]
Let $B$ be the principal block of $\cO G$ for a finite group $G$ with Sylow $2$-subgroup $D=(C_2)^6$, such that $B$ is not Morita equivalent to any of the blocks in the statement of the theorem and such that $([G:O_{2'}(Z(G))],|G|)$ is minimised in the lexicographic ordering. Then $B$ has defect group $D$.

First, we show that these hypotheses on $B$ imply two important facts: \begin{itemize}\setlength\itemsep{0em}
\item[(I)] $B$ is quasiprimitive, that is, for any normal subgroup $N \lhd G$ any block of $\cO N$ covered by $B$ is $G$-stable.
In fact, let $N \lhd G$, and let $b$ be a block of $\cO N$ covered by $B$. We write $I_G(b)$ for the stabiliser of $b$ under conjugation by $G$. Then we can consider the Fong-Reynolds correspondent $B_I$ as in Proposition \ref{fong16}, the unique block of $I_G(b)$ covering $b$ and with Brauer correspondent $B$, that is Morita equivalent to $B$ and shares a defect group with it. In particular, using Brauer's third main theorem, $B_I$ is the principal block of $I_G(b)$ since $B$ is its Brauer correspondent. By minimality, it follows that $I_G(b)=G$, and the same is true for any block of any normal subgroup of $G$.
\item[(II)] If there is a normal subgroup $N \lhd G$ such that $B$ covers a nilpotent block $b$ of $\cO N$, then $N \leq O_2(G)Z(G)$. This follows from minimality and quasiprimitivity, using Corollary \ref{kulshammerpuigcorollary6}.
\end{itemize}

Since the principal blocks of $G$ and $G/O_{2'}(G)$ are isomorphic, by minimality we can assume that $O_{2'}(G)=1$. Then when $B$ covers a nilpotent block $b$ of a normal subgroup $M$, (II) implies that $M \leq O_2(G)$. 

From Proposition $5$\textsc{a-d} in \cite{fongharris}, it holds that
$$N=O^{2'}(G) = S_0 \times \prod_{i=1}^t S_i$$
where $S_0$ is a $2$-group and each $S_i$ is a nonabelian simple group. Note that by definition $[G:N]$ is odd, so $D \leq N$. Also note that $O_2(G)=S_0$, and that $O_p(G)=1$ for each $p \neq 2$. Then the Fitting subgroup $F(G)=S_0$.

Each $S_i$ is, by definition, a component of $G$, and since $G/N$ has odd order and is, therefore, solvable, any component of $G$ is among the $S_i$ listed. Then $E(G)= \prod S_i$, so $F^*(G)=N$ and in particular $C_G(N) \leq N$: therefore $N \lhd G \leq \Aut(N)$. 

Let $b$ be the principal block of $N$, and for $i\geq 1$, let $b_i$ be the principal block of $S_i$. Note that each of these blocks is covered by $B$. Then from \cite[15.1]{alp151} its defect group is of the form $D_i=D \cap S_i$. Since $S_i \cap O_2(G)= \{1\}$, (II) implies that each $b_i$ is not a nilpotent block, so in particular $|D_i|\geq 2^2$. Then since $S_i \cap S_j = \{1\}$, $t \leq 3$.

In the following, whenever examining a block $c$ with defect group $(C_2)^2$, we remark that from Proposition \ref{classification6} there is always a source algebra equivalence between $c$ and one of $\cO (C_2)^2$, $\cO A_4$ or $B_0(\cO A_5)$. For brevity, we will say that $c$ is, respectively, nilpotent or of type $A_4$ or $A_5$. \\

Let $s=\log_2(|S_0|)$. We examine each possibility for $t$ and $s$, using Proposition \ref{walter} to determine the possibilities for each $S_i$. 

Suppose that $t=0$. Then $D=S_0$, so $B$ has a normal defect group. Then Proposition \ref{normalD6} gives a contradiction, since every block already appears in the list.

Suppose that $t=1$. Then $S_1 = E(G)$ and: \begin{itemize}\setlength\itemsep{0em}
\item If $s=0$ then $S_1 = \SL_2(64)$. Since $\Out(\SL_2(64)) = C_6$, then $G/N \leq C_3$. If $G=N$ then $B$ is as in case (li), a contradiction. If $G=N \rtimes C_3$ then $B$ is as in case (lxxv), again a contradiction.
\item If $s=1$ then $S_1 = \SL_2(32)$. Since $\Out(\SL_2(32)) = C_5$, then $G/N \leq C_5$. If $G=N$ then $B$ is as in case (xlii), a contradiction. If $G=N \rtimes C_5$ then $B$ is as in case (lxxiii), again a contradiction.
\item If $s=2$ then $S_1 = \SL_2(16)$. Since $\Out(\SL_2(16)) = C_4$, then $G=N$ and $B$ is as in case (xxi), a contradiction.
\item If $s=3$ then $S_1 = \SL_2(8), J_1$ or ${}^2G_2(3^{2m+1})$. Then Proposition \ref{33} gives a contradiction, as $B$ already appears in the list.
\item If $s=4$ then $S_1 = A_5$ or $S_1 = \PSL_2(q)$ for some odd $q$. If $b_1$ is of type $A_4$ then $b$ is inertial, so Proposition \ref{nilpotentcovered6} implies that $B$ is also inertial, and then Proposition \ref{normalD6} gives a contradiction. If $b_1$ is of type $A_5$ then we can apply Method \ref{clubsuit6}, noting that the fact that $\Pic(b_1)=C_2$ (see \cite{bkl17}) implies that the image of $G/N$ under the map $\gamma\beta\alpha$ is contained in $\Out(S_0)$, and therefore $B= b_1 \otimes B'$ where  $B'$ is a block with normal defect group $(C_2)^4$. Now applying the main theorem of \cite{ea17} leads to a contradiction, as all such Morita equivalence classes already appear in our list.
\item If $s=5$ then $b_1$ is nilpotent, so (II) implies that $S_1 \leq S_0$, a contradiction.
\item If $s=6$ then $t=0$, a contradiction.
\end{itemize}

Suppose that $t=2$. Then $S_1 \times S_2= E(G)$ and $s \leq 2$ because each $D_i$ has at least order four. Without loss of generality we suppose that $|D_1| \geq |D_2|$. Then: \begin{itemize}\setlength\itemsep{0em}
\item If $s=0$ and $|D_1| = 2^4$ then $S_1 = \SL_2(16)$ and $S_2 = A_5$ or $S_2 = \PSL_2(q)$ for some odd $q$. Since $\Out(\SL_2(16)) = C_4$, a $2$-group, then $G/N \leq \Out(S_2)$, so $G \leq S_1 \times \Aut(S_2)$. In particular, $B = b_1 \otimes b_2$ where $b_1$ is the principal block of $\SL_2(16)$, and $b_2$ has defect group $(C_2)^2$, and hence $B$ is Morita equivalent to (xxi), (xlv) or (xlvi), a contradiction.
\item If $s=0$ and $|D_1| = 2^3$ then $S_1, S_2 \in \{\SL_2(8), J_1, {}^2G_2(3^{2m+1})\}$. Then Proposition \ref{33} gives a contradiction, as $B$ already appears in the list.
\item If $s=1$ then $S_1 \in \{\SL_2(8), J_1$, ${}^2G_2(3^{2m+1})\}$, and  $S_2 = A_5$ or $S_2 = \PSL_2(q)$ for some odd $q$. Then Proposition \ref{33} applied considering $H_2 = S_0 \times S_2$ gives a contradiction, as $B$ already appears in the list.
\item If $s=2$ then $S_i =A_5$ or $S_i =\PSL_2(q_i)$ for odd $q_i$, $i=1,2$. If each $b_i$ is of type $A_4$ then $b$ is inertial, so Proposition \ref{nilpotentcovered6} implies that $B$ is also inertial, a contradiction because of Proposition \ref{normalD6}. Otherwise Proposition \ref{222} gives a contradiction, as the Morita equivalence class of $B$ already appears in the list.
\end{itemize}
Finally suppose that $t=3$. Then $s=0$, and $E(G)=S_1 \times S_2 \times S_3$, where each $D_i = (C_2)^2$ and each $b_i$ is not nilpotent. The group $G$ acts on $E(G)$ by possibly permuting the three components, giving a homomorphism $\sigma: G \to \operatorname{Sym}(3)$. Let $M=\ker(\sigma)$. Since $N \leq M$, then either $G=M$ or $[G:M]=3$.

If $G=M$ then each $S_i$ is a normal subgroup of $G$, and we can apply Proposition \ref{222} to obtain a contradiction, as the Morita equivalence class of $B$ already appears in the list. If $[G:M]=3$ then we are in the situation described in Proposition \ref{perm3}, so $B$ is Morita equivalent to a block with a normal defect group or to (lxi), again a contradiction.

Therefore, in every possible case we have a contradiction to $B$ being a minimal counterexample, and we are done.

To see that the classes are distinct it is enough to compute the Cartan matrices for each block, a process that produces distinct matrices in each case except for the pairs (ix), (x) and (xxxi), (xxxiii), which can be distinguished by computing $\operatorname{dim}(J^2(Z(B)))$ as observed in Proposition \ref{normalD6}. 

To see that in our case the inertial quotients are invariant under Morita equivalences, note that from Theorem 4.33 in \cite{benroq}, the principal block of $\cO G$ is derived equivalent to its Brauer correspondent in $\cO N_G(D)$ (and, trivially, they have the same inertial quotient). The latter is a principal block with a normal defect group so, using Proposition \ref{normalD6}, it is source algebra equivalent to $\cO (D \rtimes E)$ where $E$ is the inertial quotient of $B$. So if $C$ is the principal block of $\cO H$ then there is a derived equivalence between $\cO(D \rtimes E)$ and $\cO(D \rtimes E')$ where $E'$ is the inertial quotient of $C$. Using Magma \cite{magma}, we verified computationally that each block $c=\cO(D \rtimes E)$ is uniquely determined by the triple $(k(c), l(c),Z(c))$ (recall that derived equivalences induce an isomorphism between the centers), which implies that $E \cong E'$ via a map that preserves the action on $D$.

The last claim follows from Corollary 1.6 in \cite{lin18p}, as the isomorphism class of an elementary abelian defect group is always invariant under Morita equivalences. 
 \qedhere 
\end{proof}


\section{Purely nonprincipal Morita equivalence classes of blocks} \label{pnpsection}
Since Method \ref{clubsuit6} involves crossed products, it considers the blocks involved just as algebras, and as a result of this we lose the ability to distinguish some group-theoretic properties of blocks: among these, distinguishing principal blocks from nonprincipal blocks. In Section 4 we saw some examples of blocks that can only arise as nonprincipal blocks. The aim of this section is to give a survey of the currently known examples which, together with the blocks determined in Theorem \ref{maintheorem6}, we conjecture to be a complete list of representatives of blocks with defect group $(C_2)^6$.

Given a Morita equivalence class of blocks of finite groups, we say that it is \textit{purely nonprincipal} if it contains no principal blocks of finite groups. Examples of these classes are case (a) in the main theorem of \cite{ea17}, and the classes (a), (b) and (c) in the main theorem of \cite{ar19}. Clearly, all the classes of blocks with defect group $(C_2)^6$ that do not appear in Theorem \ref{maintheorem6} are purely nonprincipal.

As we mentioned in the proof of Proposition \ref{normalD6}, any block with a normal defect group $D$ is source algebra equivalent to a twisted group algebra $\cO_\alpha (D \rtimes E)$ where $E$ is the inertial quotient, and when $B$ is principal the twist $\alpha = 1$. It follows that every nontrivial twisted group algebra not Morita equivalent to an untwisted one is in a purely nonprincipal Morita equivalence class. We also saw that $\alpha$ can be chosen as $\beta^{-1}$ where $\beta \in H^2(E,k^\times)$. In particular, to find some examples of nonprincipal blocks with a normal defect group we can look at cases when the inertial quotient has a nontrivial Schur multiplier, and take central extensions. In this normal defect group case whenever $l(B) \neq k(E)$ the block has to be in a purely nonprincipal Morita equivalence class, to not contradict the main theorem of \cite{fongharris}. We constructed the following examples:

\begin{example} \label{nonprincipala} \normalfont
Let $G$ be a block with defect group $(C_2)^4$ Morita equivalent to case (a) in the main theorem of \cite{ea17}. Then: 
\begin{itemize}
\item The block $B_1$ of $G \times (C_2)^2$ has one simple module, but since $k(B_1) = 32$ and $k( (i) )=64$ then it is in a new Morita equivalence class.
\item The block $B_2$ of $G \times A_4$ has the same Cartan matrix as (ii), but $k(B_2)=32$, so it is in a new Morita equivalence class.
\item The block $B_3$ of $G \times A_5$ has the same Cartan matrix as (iii)  but $k(B_3)=32$, so it is in a new Morita equivalence class.
\end{itemize}
\end{example}

\begin{example} \label{nonprincipalb} \normalfont
We can use cases (b) and (c) in the main theorem of \cite{ar19} to construct two purely nonprincipal Morita equivalence classes, by considering the direct product of any representative of each class with $C_2$. These blocks have Cartan matrices that do not appear as the Cartan matrix of any of the blocks in Theorem \ref{maintheorem6}, so they form distinct Morita equivalence classes and are, hence, purely nonprincipal.
\end{example}

\begin{example}\label{nonprincipalc} \normalfont
Let $q \neq 2$ be a prime. A general phenomenon observed in the previous constructions of purely nonprincipal Morita equivalence classes is that some of them appear in central extensions of groups of type $M= (D_1 \rtimes E_1 \rtimes C_q) \times (D_2 \rtimes E_2 \rtimes C_q)$ by a third $C_q$, so for the moment we focus on blocks with a normal defect group. By replicating this process in all possible cases, we found some purely nonprincipal blocks, as follows: \begin{enumerate}
\item When $M=((C_2)^4 \rtimes C_{15}) \times A_4$, a central extension by $C_3$ gives the group $(C_2)^6 \rtimes (C_5 \times 3^{1+2}_+)$, which has two nonprincipal blocks with $5$ simple modules and the same Cartan matrix as (iv) in Theorem \ref{maintheorem6}, but $24$ irreducible characters (while (iv) has $32$).
\item When $M=((C_2)^3 \rtimes C_7)^2$, a central extension by $C_7$ gives the group $(C_2)^6 \rtimes 7^{1+2}_+$, which has six nonprincipal blocks with $1$ simple module and $16$ irreducible characters, so it is distinct from (i) in Theorem \ref{maintheorem6} and Example \ref{nonprincipala} both.
\item When $M=((C_2)^3 \rtimes (C_7 \rtimes C_3))^2$, a central extension by $C_3$ gives the group $(C_2)^6 \rtimes ((C_7 \times C_7) \rtimes 3^{1+2}_+)$, which has two nonprincipal blocks with $7$ simple modules and a new Cartan matrix (meaning it is not the Cartan matrix of any of the blocks of Theorem \ref{maintheorem6}).
\end{enumerate}
\end{example}
In all of these cases, we claim that for each fixed group all the nonprincipal blocks are Morita equivalent, and that considering the central extension $3^{1+2}_-$ instead of $3^{1+2}_+$ gives nonprincipal blocks in the same Morita equivalence class. To prove this, note that $q^{1+2}_+ = (C_q \times C_q) \rtimes C_q$, and $q^{1+2}_- = C_{q^2} \rtimes C_q$, so each central extension $G$ contains a normal subgroup $N = (D_1 \rtimes E_1 \rtimes C_q) \times (D_2 \rtimes E_2) \times C_q$ or $N= (D_1 \rtimes E_1 \rtimes C_{q^2}) \times (D_2 \rtimes E_2)$, so in each case, each nonprincipal block of $\cO G$ covers a block $b$ of $\cO N$ that is Morita equivalent to a block of $(D_1 \rtimes E_1 \rtimes C_q) \times (D_2 \rtimes E_2)$. Thus, following Method \ref{clubsuit6}, $B$ is Morita equivalent to a crossed product of the basic algebra of $b$ and $C_q$, specified by an element of the image of $G/N$ in $\Pic(b)$. Since $N$ is a direct product, the image of $G/N$ is actually contained in $\Pic(b_1) \times \Pic(b_2)$, where $b= b_1 \otimes b_2$. If there is only one subgroup $C_q \times C_q$ contained in $\Pic(b_1) \times \Pic(b_2)$, then there are three possibilities for $B$: two of them are given by $C_q = G/N \leq \Pic(b_i)$ where $i=1,2$, and the third one by the diagonal embedding. Since each extension we considered does not fix the irreducible characters neither of $N_1$ nor of $N_2$, in each case the element in $\Pic(b)$ corresponding to the Morita equivalence class of $B$ is the one given by the diagonal embedding of $C_q$. In particular, there is a unique possibility for the Morita equivalence class of $B$, as we claimed. It remains to show that for each case the Sylow $q$-subgroup of $\Pic(b_i)$ is $C_q$: for our three examples this can be deduced from the Picard groups computed in \cite{eali18} and the main theorem of \cite{liv19}.

\begin{example} \normalfont
When $M=(C_2)^4 \rtimes (C_3)_2 \times A_4$ (with the notation of \cite{ea17}), a central extension by $C_3$ gives the group $(C_2)^6 \rtimes 3^{1+2}_+$ (distinct from the one appearing in (xxxix), where $3^{1+2}_+$ acts faithfully on $(C_2)^6$), which has two nonprincipal blocks with $1$ simple module but $24$ irreducible characters (while (iv) has $32$), which means that the blocks form a distinct Morita equivalence class from any other class with one simple module seen so far. We conjecture that also in this case choosing the other central extension ($3^{1+2}_-$) gives the same Morita equivalence class of blocks.
\end{example}

\begin{example} \normalfont
The only case in which the Schur multiplier is not a cyclic group that we encounter is when $M=(A_4)^3$, since in this case $M(E)=(C_3)^3$. Since $M$ is not a perfect group, there is not a universal central extension, but all central extensions by $M(E)$ lie in the same isoclinism equivalence class. One example of such an extension is given by $G=(C_2)^6 \rtimes \operatorname{SmallGroup}(729,122)$, where $\operatorname{SmallGroup}(729,122)=((C_3)^2 \times 3^{1+2}_+) \rtimes C_3$ and $(C_3)^2$ acts trivially on $(C_2)^6$. This group has twelve nonprincipal blocks with $24$ irreducible characters, six with $32$ irreducible characters and eight with $16$ irreducible characters: any two blocks with the same number of irreducible characters share Cartan matrices and every other invariant. Each of these blocks is purely nonprincipal, since the triple $(k(c),l(c),\operatorname{CartanMatrix}(c))$ does not occur in any of the blocks in Theorem \ref{maintheorem6}. We conjecture that these blocks form three Morita equivalence classes, and explicit computation of examples suggests that, moreover, any block of a central extension of $(A_4)^3$ by $(C_3)^k$ for $k=1,2,3$ is Morita equivalent to a block of $\cO G$. In particular, we conjecture that each nonprincipal block with $32$ irreducible characters of $\cO G$ is Morita equivalent to the block $B_2$ in Example \ref{nonprincipala}. 
\end{example}

\begin{example} \normalfont
In any of the constructions above, whenever we have considered the group $((C_2)^n \rtimes C_{2^n-1}) \rtimes C_k$  where $k$ divides $n$, we could have instead considered $\SL_2(2^n) \rtimes C_k$. In the above, this arose only when $n=3$, but for bigger defect groups the same construction could be replicated (for instance, $(\SL_2(32) \rtimes C_{31})^2 \rtimes 5^{1+2}_+$ gives an example of a nonprincipal block with defect group $(C_2)^{10}$). 
By substituting $(C_2)^3 \rtimes C_7$ with $\SL_2(8)$, we obtain an additional example of a purely nonprincipal block in the group $\SL_2(8)^2 \rtimes 3^{1+2}_+$. Note that the same process constructs one of the blocks in Example \ref{nonprincipalb}.
\end{example}

We propose the following conjecture:
\begin{conjecture}
The purely nonprincipal Morita equivalence classes of blocks with defect group $(C_2)^6$ are the following. For each group listed, there is only one relevant Morita equivalence class of nonprincipal blocks:
\begin{enumerate}[label=(\alph*)]\setlength\itemsep{0em}
\item $((C_2)^4 \rtimes 3^{1+2}_+) \times (C_2)^2$
\item $((C_2)^4 \rtimes 3^{1+2}_+) \times A_4$
\item $((C_2)^4 \rtimes 3^{1+2}_+) \times A_5$
\item $((C_2)^5 \rtimes (C_7 \rtimes 3^{1+2}_+)) \times C_2$
\item $((\SL_2(8) \times (C_2)^2) \rtimes 3^{1+2}_+) \times C_2$
\item $(C_2)^6 \rtimes (C_7 \rtimes 3^{1+2}_+)_2$ 
\item $(C_2)^6 \rtimes (C_5 \times 3^{1+2}_+)$
\item $(C_2)^6 \rtimes 7^{1+2}_+$ 
\item $(C_2)^6 \rtimes 3^{1+2}_+$
\item $(C_2)^6 \rtimes ((C_3)^2 \rtimes 3^{1+2}_+) = (C_2)^6 \rtimes \operatorname{SmallGroup}(243,3)$
\item $(C_2)^6 \rtimes \operatorname{SmallGroup}(729,122) b_1$ 
\item $(C_2)^6 \rtimes \operatorname{SmallGroup}(729,122) b_2$ 
\item $(C_2)^6 \rtimes \operatorname{SmallGroup}(1029,12)$ 
\item $(C_2)^6 \rtimes ((C_7 \times C_7) \rtimes 3^{1+2}_+)$        
\item $\SL_2(8)^2 \rtimes 3^{1+2}_+$
\end{enumerate}
In particular, any block of a finite group with defect group $(C_2)^6$ is Morita equivalent to a block listed above or to a block in the list of Theorem \ref{maintheorem6}. 
\end{conjecture}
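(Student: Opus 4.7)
The plan is to argue in two parts: first, that the fifteen listed classes (a)--(o) are pairwise distinct and all purely nonprincipal; second, that no further purely nonprincipal Morita class exists. For the first part one proceeds exactly as in Section \ref{pnpsection}. Each candidate block is presented concretely, so the triple $(k(B), l(B), \text{Cartan matrix})$ can be computed and compared against the same invariants for every class in Theorem \ref{maintheorem6} and every other listed nonprincipal class, with $\dim J^2(Z(B))$ resolving any coincidences of Cartan matrix exactly as in the proof of Theorem \ref{normalD6}. For the twisted normal-defect cases among (a)--(o), Lemma \ref{sambalecalcs6} provides a shortcut: whenever $l(B) \neq k(E)$ the block cannot be principal, so it automatically sits in a new class once its Cartan matrix has been checked against Theorem \ref{maintheorem6}.

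For the second part one would run a minimal-counterexample argument parallel to the proof of Theorem \ref{maintheorem6}, but dropping the principal hypothesis on $B$. Properties (I) and (II) of that proof still hold verbatim: Fong's first reduction (Theorem \ref{fong16}) forces $B$ to be quasiprimitive by minimality, and Corollary \ref{kulshammerpuigcorollary6} forces any normal subgroup whose covered block is nilpotent to lie in $O_2(G)Z(G)$. Hence after killing $O_{2'}(G)$ one is reduced to $G$ with $F^*(G) = O_2(G) \times \prod_{i=1}^t S_i$ and $C_G(F^*(G)) \leq F^*(G)$, and the same case split on $0 \leq t \leq 3$ and $s = \log_2|O_2(G)|$ applies. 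At each leaf one analyses the covered block $c$ via Method \ref{clubsuit6}, now with the crucial modification that nontrivial cocycles $\zeta \in H^2(G/N, \mathcal{U}(Z(fcf)))$ of Theorem \ref{crossedprodeqclass6} must be enumerated, rather than set to $1$ as is legitimate for principal blocks along cyclic extensions. The Morita types of the building blocks $c_i$ coming from the components $S_i$ are supplied by the classification of abelian-defect blocks of quasisimple groups in \cite[6.1]{ekks}, and the remaining analysis should follow the outline of Propositions \ref{33}, \ref{321}, \ref{222} and \ref{perm3}.

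The hard part is exactly the obstacle singled out in the introduction: Method \ref{clubsuit6} feeds on knowledge of $\Pic(b)$ for each intermediate block $b$, and for nonprincipal blocks most of the relevant Picard groups have not been computed. One would need $\Pic$ of the twisted group algebras $\cO_\alpha(D \rtimes E)$ underlying (f), (h), (i), (j), of the central-extension blocks appearing in Example \ref{nonprincipalc}, and of their direct products with Klein-four blocks; in each case one must also determine which $\zeta$-classes are actually realised by a finite group. A second delicate point is the reduction between $3^{1+2}_+$ and $3^{1+2}_-$: the ad hoc diagonal-embedding argument after Example \ref{nonprincipalc} collapses the two extensions to a single Morita class in the three specific cases treated there, and to complete the conjecture it must be extended uniformly to every line of the list where both extensions arise. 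Once these Picard-group computations are available, the argument should reduce each leaf of the case split to a finite check, so the obstruction is genuinely one of Picard-group data rather than of strategy.
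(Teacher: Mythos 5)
The statement you are proving is a conjecture: the paper offers no proof and explicitly states that one is ``currently out of reach.'' Your outline is therefore best judged against the paper's own assessment of what is missing, and on that score it is partly aligned --- you correctly identify the need to replace Proposition \ref{walter} by the quasisimple classification of \cite[6.1]{ekks}, the need to enumerate nontrivial cocycle classes $\zeta$ in Theorem \ref{crossedprodeqclass6} rather than setting $\zeta=1$, and the fact that the real bottleneck is the absence of Picard-group data for the twisted group algebras and central-extension blocks. The first half of your argument (distinguishing the fifteen listed classes by $(k,l,\text{Cartan matrix})$ and using $l(B)\neq k(E)$ to rule out principality in the normal-defect cases) is exactly how Section \ref{pnpsection} proceeds and is unproblematic.

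However, your reduction in the second half has two genuine gaps, both of which the paper itself singles out. First, you assert that the case split of Theorem \ref{maintheorem6} carries over with $F^*(G)=O_2(G)\times\prod_{i=1}^t S_i$ a direct product of a $2$-group with \emph{simple} groups. That decomposition comes from Proposition 5\textsc{a-d} of \cite{fongharris} and uses that $B$ is principal; for an arbitrary block the components are quasisimple and the generalised Fitting subgroup is a \emph{central product}, not a direct product, so the tensor decomposition $c\cong c_1\otimes\cdots\otimes c_t$ on which Propositions \ref{33}--\ref{perm3} rest is no longer available in the stated form. Second, and more seriously, dropping the principal hypothesis means $D$ need no longer be a Sylow $2$-subgroup of $G$, so $[G:O^{2'}(G)]$ need not be odd and Method \ref{clubsuit6} (which requires $[G:N]$ odd so that $B$ and $b$ share a defect group) does not apply to the whole chain: one must separately handle normal subgroups of index $2$, which is precisely why the paper invokes the theory of $(G,B)$-local systems of \cite{puigusami93}. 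Your proposal omits this step entirely, so even granting all the Picard-group computations you list, the reduction would not close.
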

The blocks that appear in the list above but were not mentioned in any of the examples were simply obtained by taking a central extension of groups $D \rtimes E$ with $M(E)=C_3$ or $C_7$, depending on the cases. In case (f), the subscript means that the inertial quotient acts as in case (lix) of Theorem \ref{maintheorem6}. 

A proof of this conjecture is currently out of reach, as the method used in \cite{ar19} for $(C_2)^5$ requires data that we do not currently have. In particular, when dealing with arbitrary blocks we have to use the main theorem of \cite{ekks} instead of Walter's result in Proposition \ref{walter}, which requires consideration of more classes of blocks. Moreover, the generalised Fitting subgroup of a group $G$ with a block that is a minimal counterexample has a more complicated structure that involves central products instead of direct products. Finally, we cannot assume that the defect group of our block is the Sylow $2$-subgroup of $G$, so we have to study normal subgroups of index $2$, which requires the theory of $(G,B)$-local systems, originally developed by Usami and Puig in \cite{puigusami93} (see Section 3 in \cite{ar19}). 

All of this is necessary because there is no current ``good" characterization of purely nonprincipal blocks, so we have to look at the general case of an arbitrary block in order to classify all Morita equivalence classes. 

Nevertheless, in all known examples the only purely nonprincipal Morita equivalence classes are the ones with a Brauer correspondent source algebra equivalent to a twisted group algebra with a nontrivial twist. Brou\'e's abelian defect group conjecture implies that all such blocks cannot be principal, but we ask the inverse question: is there any purely nonprincipal Morita equivalence class of blocks that has a Brauer correspondent source algebra equivalent to a group algebra? A proof of Brou\'e's conjecture would give a negative answer for derived equivalence classes, but perhaps some purely nonprincipal Morita example could still be found in the derived equivalence class of a principal block.

\section*{Acknowledgements}
I thank Charles Eaton, my PhD supervisor, for his constant support and for many helpful discussion. I also thank Elliot McKernon for many helpful discussions.

\footnotesize

\end{document}